\newtheorem{thm}{Theorem}[section]
\newtheorem{prop}[thm]{Proposition}
\newtheorem{lem}[thm]{Lemma}
\theoremstyle{definition}
\newtheorem{defn}[thm]{Definition}
\newtheorem{rem}[thm]{Remark}
\theoremstyle{remark}
\newtheorem{claim}[thm]{Claim}
\newcommand{\pr}[1]{{\mathbb P}^{#1}}
\newcommand{\skipit}[1]{{}}
\newcommand{\prfend}{\hbox to7pt{\hfil}
\par\vskip-\baselineskip\hbox to\hsize
{\hfil\vbox {\hrule width6pt height6pt}}\vskip\baselineskip}
\newcommand{\Z}{\mathbb{Z}}
\newcommand{\R}{\mathbb{R}}
\newcommand{\N}{\mathbb{N}}
\newcommand{\Q}{\mathbb{Q}}
\newcommand{\TC}{\mathbb{T}}
\newcommand {\PP}{\mathbb{P}}
\newcommand {\GG}{\mathbb{G}}
\newcommand {\LL}{\mathbb{L}}
\newcommand {\FF}{\mathbb{F}}
\newcommand{\Oc}{\mathcal{O}}
\DeclareMathOperator{\cok}{cok}
\DeclareMathOperator{\Neg}{Neg}
\DeclareMathOperator{\nneg}{neg}
\DeclareMathOperator{\Cl}{Cl}
\DeclareMathOperator{\Image}{Im}
\DeclareMathOperator{\EFF}{EFF}
\DeclareMathOperator{\NEF}{NEF}
\newcommand {\C}[1]{\mathcal{#1}}
\DeclareMathOperator{\Pic}{Pic}
\DeclareMathOperator{\Char}{char}
\newcommand{\myarrow}[2]{\hbox to #1pt{\hfil$\to$\hfil}{\hskip-#1pt{\raise
10pt\hbox to#1pt{\hfil$\scriptscriptstyle #2$\hfil}}}}
\renewcommand{\theequation}{\thesection.\arabic{thm}} 
\begin{document}

\title{Hilbert functions of fat point subschemes of the plane: the Two-fold Way}

\author{A.\ V.\ Geramita, B.\ Harbourne \& J.\ Migliore}

\address{A.\ V.\ Geramita\\
Department of Mathematics\\
Queen's University\\
Kingston, Ontario\\
and Dipartimento di Matematica\\
Universit\`a di Genova\\
Genova, Italia}
\email{anthony.geramita@gmail.com}

\address{Brian Harbourne\\
Department of Mathematics\\
University of Nebraska\\
Lincoln, NE 68588-0130 USA}
\email{bharbour@math.unl.edu}

\address{Juan Migliore\\
Department of Mathematics\\
University of Notre Dame\\
South Bend, IN}
\email{migliore.1@nd.edu}

\date{January 25, 2011}

\markboth{A.\ V.\ Geramita, B.\ Harbourne, J.\ Migliore}{Classifying 
Hilbert functions of fat point subschemes in $\pr{2}$}

\thanks{Acknowledgments: We thank the NSF, whose 
support for the MAGIC05 conference at the University of Notre Dame gave
us an opportunity to begin work on this paper.
Geramita also thanks the NSERC for research support, and Harbourne and Migliore 
thank the NSA for its partial support of their research
(under grant H98230-07-1-0066 for Harbourne and under grants H98230-07-1-0036
and H98230-09-1-0031 for Migliore).}

\begin{abstract}
Two approaches for determining Hilbert functions
of fat point subschemes of $\pr2$ are demonstrated.
A complete determination of the Hilbert functions which occur for
9 double points is given using the first approach,
extending results obtained in a previous paper using the second 
approach. In addition the second approach is used to obtain a complete 
determination of the Hilbert functions
for $n\geq9$ $m$-multiple points for every $m$ if 
the points are smooth points of an irreducible plane cubic curve. 
Additional results are obtained using the first approach for $n\geq9$ double points
when the points lie on an irreducible cubic (but now are not assumed to be smooth points
of the cubic).
\end{abstract}

\maketitle

\section{Introduction}\label{intro}

If $X$ is a reduced set of $n$ points in $\pr2$, the {\em fat point subscheme $Z = mX\subset\pr2$} 
is the $(m-1)$-st infinitesimal neighborhood of $X$. Thus $mX$ is the subscheme defined by the 
symbolic power $I(X)^{(m)}\subset R=k[\pr2]$ (that is, by the saturation of the ideal $I(X)^m$
with respect to the ideal generated by the coordinate variables in the ring $k[\pr2]$).  
The question motivating this paper is: What 
are the Hilbert functions of such subschemes of $\pr2$?
There have been two main approaches to this question, 
and one goal of this paper is to demonstrate them in various situations.

The two approaches are exemplified by the papers \cite{GMS} and \cite{GHM}.
The approach of \cite{GMS} is to identify constraints that Hilbert functions
must satisfy and then for each function satisfying those constraints to try to find a 
specific subscheme having that function as its Hilbert function.
A complete classification of all Hilbert functions of reduced 0-dimensional
subschemes of projective space was given in \cite{GMR} using essentially this approach.
The paper \cite{GMS} then uses \cite{GMR} as the starting point for
classifying Hilbert functions for subschemes of the form $Z = 2X\subset\pr2$
with $X$ reduced and 0-dimensional.
This approach is most effective when the class of possible functions
is fairly limited, hence the restriction in \cite{GMS} to the case $m=2$.
This approach has the advantage of providing explicit results often without needing
detailed information about the disposition of the points, but it has the disadvantage
of not providing a complete dictionary of which point sets give which Hilbert function.
The approach of \cite{GHM} is to use the geometry of the surface
$Y$ obtained by blowing up the points of the support of $Z$ to
obtain information about the Hilbert function of $Z$.
This approach is most effective when the geometry of $Y$ is well-understood,
hence the restriction in \cite{GHM} to the case $n\leq 8$.
Given points $p_i$ and non-negative integers $m_i$,
the subscheme defined by the ideal $\cap_{i=1}^n(I(p_i)^{m_i})$
is also called a fat point subscheme, and is denoted
$m_1p_1+\cdots+m_np_n$.
The advantage of the second approach, as implemented in \cite{GHM},
is that it provided complete results for all fat point subschemes 
$Z=m_1p_1+\cdots+m_np_n$ with $n\leq8$,
together with a complete determination of which $Z$ give 
the same Hilbert function, but it had the cost of needing a lengthy
analysis of the geometry of $Y$, and gives only recursive
determinations of the Hilbert functions. However, for $n\leq8$ and
$k=2$ there are only finitely many cases, so a complete list of
the Hilbert functions which occur can be given. See \cite{GHM}
for this list. 

The first case left open by \cite{GHM} is $n=9$ points of $\pr2$.
It should, in principle, be possible to carry out the necessary analysis
to obtain a complete recursive classification of Hilbert functions and 
corresponding points sets for $n=9$, but whereas for $n\leq 8$ there are only finitely
many classes of sets of $n$ points, there will certainly be infinitely many when $n=9$
(related to the fact that there can be infinitely many prime divisors on $Y$ of 
negative self-intersection, and to the fact that effective nef divisors $F$ can occur with 
$h^1(Y, {\mathcal O}_Y(F))>0$). Thus a complete classification in this case
using the methods of \cite{GHM} will be a substantial effort, which we leave
for future research (not necessarily by us). 

Instead, in this paper we will focus on some special cases.
We devote \textsection\ref{9dblpts} to demonstrating the first approach by obtaining a complete answer in
the case of $n=9$ and $m=2$. This also shows how one could recover the result
for $n=8$ and $m=2$ obtained in \cite{GHM} using the methods of \cite{GMS}.

The rest of the paper is devoted to demonstrating both methods
for the case of $n$ points of multiplicity $m$ on cubics, under somewhat
different hypotheses chosen to play to the strengths of each method.
The Philosophy of the First Way is to use known facts about Hilbert functions to say things about what Hilbert functions are possible. The Philosophy of the Second Way is to use known facts about cohomology of blown up surfaces to say things about what dimensions of linear systems are possible. Sections 
\S\ref{ptsoncubics} (using the First Way) and \S\ref{appII} (using the Second Way)
illustrate how we can attack the same problem and obtain overlapping and sometimes complementary
results, but using dramatically different ways to do so.  

So, given points on a plane cubic,
for the First Way we will assume the cubic is irreducible, that $m=2$ 
and, in some cases, that $n$ is not too small.
Our main results here are Theorem \ref{smooth cubic} and Theorem \ref{singular cubic}.  
For the Second Way
we will make no restrictions on $m$ nor assume the cubic is irreducible but 
we will assume the points are smooth points of the cubic and 
we will assume that the points are evenly distributed (meaning essentially that
no component contains too many of the points).
Under these two assumptions we give a complete determination of 
all possible Hilbert functions in Theorem \ref{method2cor1}. 
Using the same techniques we will, in Remark \ref{method2cor2},
also recover the Hilbert functions 
for $X$ and $2X$ when $X$ is a reduced set of points contained in 
a reduced, irreducible singular cubic curve in case the singular 
point of the curve is one of the points of $X$.

We now discuss both methods in somewhat more detail.
For the first approach we will follow \cite{GHM} and \cite{GMS}
and sometimes work with the first difference, $\Delta h_{2X}$, of the Hilbert function $h_{2X}$
rather than with $h_{2X}$ directly, since for our purposes $\Delta h_{2X}$
is easier to work with, but we regard $\Delta h_{2X}$ as just an equivalent formulation 
of the Hilbert function and so for simplicity we will refer to it as the Hilbert function.  
The first approach can be summarized as follows.  We start by listing 
all Hilbert functions $\Delta h_X$ for reduced sets $X$ of $n$ points, using \cite{GMR},
and then we analyze each case in turn using $h_X$ to
constrain the behavior of $h_{2X}$. For example, in some extreme cases 
the form of $\Delta h_X$ forces many of the points of $X$ to lie on a line;
knowing this can be very useful in determining $h_{2X}$.

Our analysis uses the following tools:
(a) a crude bound on the regularity of $I(2X)$, giving an upper bound for the 
last degree in which $\Delta h_{2X}$ can be non-zero; 
(b) B\'ezout considerations giving the values of $\Delta h_{2X}$ in most degrees; 
(c) the fact that the sum of the values of $\Delta h_{2X}$ is 27; and
(d) a theorem of Davis \cite{davis} giving geometric consequences for certain 
behavior of the function $\Delta h_{2X}$.  The idea is that we know the value 
of the Hilbert function for most degrees by (a), (b) and (c), and we can 
exhaustively list the possibilities for the remaining degrees.  
Then we use (d) to rule out many of these.  Finally, for the cases that 
remain, we try to construct examples of them (and in the situations 
studied in this paper, we succeed).

For the second approach we study $h_Z$ for an arbitrary fat point subscheme
$Z=m_1p_1+\cdots +m_np_n\subset\pr2$ using the geometry
of the surface $Y$, where $\pi:Y\to\pr2$ is the morphism 
obtained by blowing up the points $p_i$. This depends on 
the well known fact that $\dim I(Z)_t=h^0(Y, {\mathcal O}_Y(F))$
where $F=tL-m_1E_1-\cdots-m_nE_n$, ${\mathcal O}_Y(L)=\pi^*{\mathcal O}_{\pr2}(1)$
and $E_i=\pi^{-1}(p_i)$. The fundamental fact here is 
the theorem of Riemann-Roch:
\addtocounter{thm}{1}
\begin{equation}\label{RRoch}
h^0(Y, {\mathcal O}_Y(F))-h^1(Y, {\mathcal O}_Y(F))+
h^2(Y, {\mathcal O}_Y(F))=\frac{F^2-K_Y\cdot F}{2}+1=\binom{t+2}{2}-\sum_i\binom{m_i+1}{2}.
\end{equation}
To see the relevance of \eqref{RRoch}, note that $K_Y=-3L+E_1+\cdots+E_n$, so we have by duality that 
$h^2(Y, {\mathcal O}_Y(F))=h^0(Y, {\mathcal O}_Y(K_Y-F))$ and
thus $h^2(Y, {\mathcal O}_Y(F))=0$ if $t<0$. Now, since we are interested in the values of Hilbert functions when
$t\geq0$, we have
$h_Z(t) = \dim(R_t)-\dim(I(Z)_t)=\binom{t+2}{2}-h^0(Y, {\mathcal O}_Y(F))$ which using \eqref{RRoch} becomes
\addtocounter{thm}{1}
\begin{equation}\label{RRoch2}
h_Z(t) =\sum_i\binom{m_i+1}{2}-h^1(Y, {\mathcal O}_Y(F)).
\end{equation}

This second approach, as applied in \cite{GHM}, 
depended on knowing two things: the set $\hbox{Neg}(Y)$
of all prime divisors $C$ on $Y$ with $C^2<0$ and on knowing
$h^0(Y,{\mathcal O}_Y(F))$ for every divisor $F$
for which we have $F\cdot C\geq0$ for all $C\in \hbox{Neg}(Y)$.
Given $\hbox{Neg}(Y)$,
one can in principle reduce the problem of computing 
$h^0(Y, {\mathcal O}_Y(F))$ for an arbitrary divisor $F$
to the case that $F\cdot C\geq0$ for all $C\in \hbox{Neg}(Y)$.
If $n\geq 2$ and $F\cdot C\geq 0$ for all $C\in \hbox{Neg}(Y)$,
then $h^2(Y, {\mathcal O}_Y(F))=0$, so from Riemann-Roch
we have only $h^0(Y, {\mathcal O}_Y(F))\geq 1+(F^2-K_Y\cdot F)/2$.

When $n\leq8$ or the points $p_i$ lie on a conic (possibly singular), 
this inequality is always an equality,
but for $n\geq9$ points not contained in a conic it needn't be, 
so more information in general is needed.
Similarly, in case $n\leq8$ or the points $p_i$ lie on a conic (possibly singular), 
it turns out, in fact, that $\hbox{Neg}(Y)$ is a finite set, but this also can fail 
for $n\geq9$ points not contained in a conic.
As a consequence, given $\hbox{Neg}(Y)$
one can determine $h_Z$
for any fat point subscheme $Z=m_1p_1+\cdots +m_np_n\subset\pr2$
if either $n\leq8$ or the points $p_i$ lie on a conic.
This raises the question of what sets $\hbox{Neg}(Y)$
occur under these assumptions. We answered this question
in \cite{GHM}. There are only finitely many possibilities
and \cite{GHM} gives a complete list.

When $n\geq 9$ and the points $p_i$
do not lie on a conic then not only can $\hbox{Neg}(Y)$ fail to be finite
but $h^1(Y, {\mathcal O}_Y(F))$ need not vanish, even if
$F\cdot C\geq 0$ for all $C\in \hbox{Neg}(Y)$ and even if $F$ is effective.
Assuming that the points $p_i$ lie on a cubic curve does not 
eliminate either difficulty, but it does mean that
$-K_Y$ is effective (whether the cubic is irreducible or not), 
and thus the results of \cite{anticanSurf}
can be applied to the problem of computing $h^0(Y, {\mathcal O}_Y(F))$.
In case $-K_Y$ is effective, it is known what kinds of classes
can be elements of $\hbox{Neg}(Y)$, but no one has yet classified precisely which sets $\hbox{Neg}(Y)$ arise for $n  
\geq 9$ (doing this for $n=7, 8$ was the new contribution in \cite{GHM}). On the other hand,
even without this complete classification, partial results can still be obtained
using the second approach, as we will show here 
using information about the geometry of $Y$ developed in \cite{anticanSurf}.


\section{Approach I: Nine Double Points}\label{9dblpts}

It is natural to ask what can be said for fat point schemes $Z$ supported at 
$r>8$ points.  As observed in \cite[Remark 2.2]{GHM}, there are infinitely
many configuration types of $r>8$ points, so we will restrict our attention
to subschemes $2Z=2(p_1+\cdots+p_r)$ of $\pr2$. Since we are now restricting the multiplicities of the points to be at most 2,
it is not necessary to make an exhaustive list of the configuration types -- indeed, we will point out situations where different configurations exist but nevertheless do not give different Hilbert functions. Instead, in this situation we can bring to bear
the methods developed in \cite{GMS}, and  to demonstrate additional methods
which can be used.  We will determine all Hilbert functions
that occur for double point subschemes $2Z=2(p_1+\cdots+p_9)$ of $\pr2$, for every Hilbert function occurring as the
Hilbert function of a simple point subscheme $Z=p_1+\cdots+p_9$. 

\begin{defn}\rm
Let $Z$ be a zero-dimensional subscheme of $\pr{n}$ with Hilbert function $h_Z$.  The {\em difference function} of $Z$ is the first difference of the Hilbert function of $Z$, $\Delta h_Z (t) = h_Z(t) - h_Z(t-1)$.  (This is sometimes also called the {\em $h$-vector} of $Z$, and sometimes the {\em Castelnuovo function} of $Z$.) 
\end{defn}

The Hilbert function and its difference function clearly give equivalent information and it is
primarily because of the simpler bookkeeping allowed by the first difference that we use it. 
Notice that $\Delta h_Z$ is the Hilbert function of any Artinian reduction of $R/I_Z$ by a linear form.   

One problem raised in \cite{GMS} is the existence and determination of maximal and minimal
Hilbert functions.  In the current context, this means that we fix an underlying Hilbert function $\underline{h}$ that exists for some set of 9 points in $\pr2$, and letting $X$ move in the irreducible flat family of all sets of points with Hilbert function $\underline{h}$, we ask whether there is a maximal and a minimal Hilbert function for the corresponding schemes $Z=2X$.  It was shown in \cite{GMS} that there {\em does} exist a maximal such Hilbert function, denoted ${\underline h}^{max}$ (for any number of points).  The proof in \cite{GMS} is nonconstructive, and \cite{GMS}
determines ${\underline h}^{max}$ in only a few special cases.
The paper \cite{GMS} also raises the question of whether 
${\underline h}^{min}$ always exists; i.e., whether there exists an $X'$
such that $h_{2X}$ is at least as big in every degree as $h_{2X'}$ for
every $X$ with $h_X=h_{X'}$. This question remains open.

A useful tool is the following lemma. This lemma, and generalizations of it, are well-known.
For a very short proof of the statement given here see \cite[Lemma 2.18]{GMS}.

\begin{lem} \label{reg lemma}
Let $X$ be a reduced set of points in $\pr2$ with regularity $r+1$.  Then the regularity of $I_{2X}$ is bounded by $\hbox{\rm reg}(I_{2X}) \leq 2 \cdot \hbox{\rm reg}(I_X) = 2r+2$.
\end{lem}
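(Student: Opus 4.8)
The plan is to reduce the asserted bound to one cohomological vanishing and then to establish that vanishing by separating the conditions imposed by $2X$. Since $2X$ is zero-dimensional, $H^2(\pr2,\mathcal{I}_{2X}(t))=H^2(\pr2,\mathcal{O}_{\pr2}(t))$, which vanishes for $t\geq-2$; hence, by the cohomological definition of Castelnuovo--Mumford regularity (and because $I_{2X}$ is saturated), proving $\mathrm{reg}(I_{2X})\leq 2r+2$ is equivalent to the single statement $H^1(\pr2,\mathcal{I}_{2X}(2r+1))=0$, i.e.\ that $2X$ imposes independent conditions on forms of degree $2r+1$, or $h_{2X}(2r+1)=3\deg X$ (each double point having degree $\binom{3}{2}=3$). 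First I would unwind the hypothesis $\mathrm{reg}(I_X)=r+1$ into $H^1(\pr2,\mathcal{I}_X(r))=0$, so that $X$ already imposes independent conditions in degree $r$; in particular, for each $p\in X$ there is a separator $A_p\in R_r$ vanishing at every point of $X$ except $p$.

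The heart of the argument is an interpolation in degree $2r+1$. Fix $p\in X$. Then $A_p^2\in R_{2r}$ vanishes to order at least two at each point of $X\setminus\{p\}$, so $A_p^2\in I_{2(X\setminus\{p\})}$, while $A_p^2$ is a local unit at $p$. For any linear form $L\in R_1$ the product $LA_p^2$ stays in $I_{2(X\setminus\{p\})}$ and has degree $2r+1$. I would then compute the image of $LA_p^2$ in the length-three local ring $\mathcal{O}_{2p}=\mathcal{O}_{\pr2,p}/I(2p)_p\cong k^3$, that is, its value and two first partials at $p$: by the Leibniz rule this triple is a lower-triangular, invertible linear image (diagonal entry $A_p(p)^2\neq0$) of the corresponding triple for $L$. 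Since the latter sweeps out all of $k^3$ as $L$ ranges over $R_1$, the forms $\{LA_p^2\}_{L\in R_1}$ surject onto $\mathcal{O}_{2p}$ while vanishing identically in $\mathcal{O}_{2q}$ for every $q\neq p$.

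Summing over $p$ then shows the restriction map $R_{2r+1}\to\bigoplus_{p\in X}\mathcal{O}_{2p}$ surjects onto each summand and annihilates the others, hence is onto; therefore $h_{2X}(2r+1)=3\deg X$, the vanishing we wanted. As the Hilbert function of the zero-dimensional scheme $2X$ is nondecreasing and bounded above by $\deg(2X)=3\deg X$, it remains maximal for all $t\geq 2r+1$, giving $\mathrm{reg}(I_{2X})\leq 2r+2=2\,\mathrm{reg}(I_X)$.

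The step I expect to be the main obstacle is the local computation showing that the $1$-jets of $\{LA_p^2\}_L$ fill $\mathcal{O}_{2p}$; this is precisely where the regularity hypothesis pays off, since it is the existence of the separators $A_p$ in degree exactly $r$ (rather than in some larger degree) that makes the threshold come out to $2r+1$. Everything else is formal. As a backup I would keep the general-line sequence $0\to\mathcal{I}_{2X}(t-1)\xrightarrow{\cdot L}\mathcal{I}_{2X}(t)\to\mathcal{O}_\ell(t)\to0$ (for $\ell$ disjoint from $X$), whose cohomology shows $H^1(\mathcal{I}_{2X}(t))$ is a quotient of $H^1(\mathcal{I}_{2X}(t-1))$ and so vanishes for all large $t$; on its own, however, this does not locate the precise degree $2r+1$, so I would use it only to confirm the tail behavior.
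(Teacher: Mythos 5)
Your proof is correct, but it is not the route the paper has in mind: the paper offers no argument of its own for this lemma, deferring instead to the ``very short proof'' of \cite[Lemma 2.18]{GMS}, which is algebraic rather than interpolation-theoretic --- one observes that $I_{2X}$ is the saturation of $I_X^2$, invokes the known bound $\hbox{\rm reg}(I_X^2)\leq 2\,\hbox{\rm reg}(I_X)$ for powers of ideals of points, and uses the fact that saturating an ideal cannot increase regularity. Your argument replaces that citation with a self-contained geometric computation: reduce the claim to the single vanishing $H^1(\pr2,\mathcal{I}_{2X}(2r+1))=0$ (the $H^2$ obstruction being automatic for ideal sheaves of zero-dimensional schemes), extract degree-$r$ separators $A_p$ from the hypothesis $H^1(\pr2,\mathcal{I}_{X}(r))=0$, and show that the forms $LA_p^2$ with $L\in R_1$ surject onto the local algebra $\mathcal{O}_{2p}\cong k^3$ while vanishing in $\mathcal{O}_{2q}$ for every $q\neq p$, so that $R_{2r+1}\to\bigoplus_{p\in X}\mathcal{O}_{2p}$ is onto. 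Each step checks out, including the persistence of the $H^1$-vanishing in higher degrees via the nondecreasing Hilbert function of a saturated zero-dimensional scheme (this also follows from Mumford's theorem that $m$-regularity of a sheaf implies $m'$-regularity for $m'\geq m$). What your approach buys is transparency and a slightly stronger conclusion: it exhibits exactly why the threshold is $2r+1$ (two separators of degree $r$ plus one linear factor, which is what sweeps out the jet space at $p$), and it proves outright that $2X$ imposes independent conditions on curves of degree $2r+1$; what it costs is length, compared to the two-line reduction to the theorem on powers of ideals. One small stylistic improvement: phrase the local step as multiplication by the unit $A_p^2$ in $\mathcal{O}_{\pr2,p}/I(2p)_p$, acting triangularly on $1$-jets, rather than via partial derivatives and the Leibniz rule; the content is identical, but it makes plain that the argument is characteristic-free, whereas talk of ``value and two first partials'' invites needless worry in small characteristic (harmless here, since only first-order jets occur, but better avoided).
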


We will also use the following result of Davis \cite{davis}.  It is a special case of a more general phenomenon \cite{BGM} related to maximal growth of the first difference of the Hilbert function.

\begin{thm} \label{davis thm}
Let $X \subset \pr2$ be a zero-dimensional subscheme, and assume that $\Delta h_X (t) = \linebreak \Delta h_X(t+1) = d$ for some $t,d$.  Then the degree $t$ and the degree $t+1$ components of $I_X$ have a GCD, $F$, of degree $d$.  Furthermore, the subscheme $W_1$ of $X$ lying on the curve defined by $F$ (i.e.\ $I_{W_1}$ is the saturation of the ideal $(I_X,F)$) has Hilbert function whose first difference is given by the truncation
\[
\Delta h_{W_1} (s) = \min \{ \Delta h_X(s), d \}.
\]
Furthermore, the Hilbert function of the points $W_2$ not on $F$ (defined by $I_{W_2} = I_X : (F)$) has first difference given by the (shifted) part {\em above} the truncation:
\[
\Delta h_{W_2}(s) = \max \{ \Delta h_X (s+d) -d , 0 \}.
\]
\end{thm}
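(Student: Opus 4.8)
The plan is to convert the hypothesis on $\Delta h_X$ into a growth statement for an ideal in two variables, extract a common factor there, lift it to a genuine planar curve, and finally read off the two difference functions from a linkage sequence. Throughout, fix a general linear form $\ell$ and recall (as the text notes) that $\Delta h_X$ is the Hilbert function of the Artinian reduction $B=R/(I_X+(\ell))$, a standard graded quotient of $S=R/(\ell)\cong k[x,y]$; write $B=S/\bar I$, where $\bar I$ is the image of $I_X$ in $S$. The hypothesis $\Delta h_X(t)=\Delta h_X(t+1)=d$ then reads $\dim\bar I_t=(t+1)-d$ and $\dim\bar I_{t+1}=(t+2)-d$, so $\dim\bar I_{t+1}=\dim\bar I_t+1$.

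First I would produce a GCD in two variables. The engine is the elementary fact that for a nonzero $V\subseteq S_s$ one has $\dim(S_1\cdot V)\ge\dim V+1$, with equality exactly when $V=\bar g\cdot S_{s-e}$ is the full space of multiples of its GCD $\bar g$ (here $e=\deg\bar g$). Applying this to $V=\bar I_t$ and using $S_1\bar I_t\subseteq\bar I_{t+1}$ together with the growth of exactly $1$ forces $\bar I_{t+1}=S_1\bar I_t$ and $\bar I_t=\bar g\,S_{t-e}$; the count $t+1-e=t+1-d$ then gives $\deg\bar g=d$. (The degenerate range in which $\bar I_t=0$ is handled directly.)

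Next I would lift $\bar g$ to a GCD $F$ of $(I_X)_t$ and $(I_X)_{t+1}$ in $R=k[\pr2]$. Setting $F=\gcd((I_X)_t)$ with $f=\deg F$ and reducing modulo a general $\ell\nmid F$ shows $\bar F\mid\bar g$, so $f\le d$. For the reverse inequality I would write $(I_X)_t=F\cdot V'$ with $\gcd(V')=1$; then the base locus of $V'$ is $0$-dimensional, so a general line $\Lambda=V(\ell)$ meets it in no point, whence $\gcd(\bar V')=1$ on $\Lambda$ and $\bar g=\bar F\cdot\gcd(\bar V')=\bar F$, giving $f=d$. The same argument in degree $t+1$ (or the inclusion $R_1\cdot(I_X)_t\subseteq(I_X)_{t+1}$) shows $F$ is also the GCD of $(I_X)_{t+1}$, proving the first assertion. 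This promotion of ``a common factor after restriction'' to ``a common factor in $R$'' is the step I would watch most carefully; what rescues it is precisely the $0$-dimensionality of the base locus of $V'$ obtained by stripping off the fixed part $F$.

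Finally I would compute the two difference functions. Here $W_2$ is cut out by $I_{W_2}=I_X:F$ and $W_1$ by $I_{W_1}=(I_X+(F))^{\mathrm{sat}}$, with $X=W_1\sqcup W_2$ as sets. The linkage sequence
\[
0\to\bigl(R/(I_X:F)\bigr)(-d)\xrightarrow{\ \cdot F\ }R/I_X\to R/(I_X+(F))\to 0
\]
yields the additivity $\Delta h_X(s)=\Delta h_{W_1}(s)+\Delta h_{W_2}(s-d)$ once the discrepancy between $I_{W_1}$ and $I_X+(F)$ is controlled. For the shape of $\Delta h_{W_1}$ I have two upper bounds: since $W_1\subseteq X$, passing to Artinian reductions gives $\Delta h_{W_1}(s)\le\Delta h_X(s)$, and since $W_1$ lies on the degree-$d$ curve $F=0$, the image of $F$ in $S/\overline{I_{W_1}}$ gives $\Delta h_{W_1}(s)\le d$ for all $s$; hence $\Delta h_{W_1}\le\min(\Delta h_X,d)$. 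The $W_2$ formula would then follow from the identity $\max(a-d,0)=a-\min(a,d)$ with $a=\Delta h_X(s+d)$. I expect the genuine obstacle to lie in upgrading these inequalities to equalities: the bounds alone cannot pin down the functions (a total-degree count $\deg W_1+\deg W_2=\deg X$ is only a consistency check), so one must show from the exact sequence that the overflow above height $d$ is carried exactly by the shifted copy of $\Delta h_{W_2}$, which requires verifying that $I_X+(F)$ is saturated in the relevant degrees (equivalently, that no spurious difference is introduced by the saturation defining $W_1$).
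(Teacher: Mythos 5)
A preliminary remark: the paper offers no proof of this statement at all; it is quoted as Davis's theorem, with references to \cite{davis} and \cite{BGM}, so your proposal has to stand on its own rather than be measured against an argument in the text. Its first half does stand. The passage to the Artinian reduction $S/\bar I$ with $S=k[x,y]$, the structure lemma that a nonzero subspace $V\subseteq S_s$ with $\dim(S_1V)=\dim V+1$ must equal $\bar g\cdot S_{s-e}$ for $\bar g=\gcd(V)$, and the lifting of $\bar g$ to an honest GCD $F$ of $(I_X)_t$ and $(I_X)_{t+1}$ by restricting to a general line missing the finite base locus of $(I_X)_t:F$ are all correct, and together they prove the first assertion (your degenerate case $\bar I_t=0$, where $d=t+1$ and $(I_X)_{t+1}$ is one-dimensional, is indeed immediate).

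The second half has a genuine gap, which you flag but do not close: all you actually establish is the one-sided bound $\Delta h_{W_1}\leq\min\{\Delta h_X,d\}$ and an additivity that is conditional on ``controlling the discrepancy'' between $I_X+(F)$ and its saturation $I_{W_1}$. That discrepancy is not a loose end but the entire content of the two displayed formulas: granted the first part, a chase through your exact sequence shows the formulas are equivalent to a general linear form $\ell$ being a nonzerodivisor on $R/(I_X+(F))$, i.e.\ to $I_X+(F)$ being saturated, equivalently to restriction commuting with the colon, $\overline{I_X:F}=\bar I:\bar F$. This is false for an arbitrary curve $F$ (take $X$ a single point and $F$ a curve missing it: then $\bar I:\bar F=S$ while $\overline{I_X:F}=\overline{I_X}\neq S$), so it must be extracted from the maximal-growth hypothesis a second time; it cannot follow, as your outline suggests, from the existence of the GCD plus the exact sequence plus a degree count, since termwise inequalities with matching totals do not force equality when the inequalities point in opposite directions. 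The missing idea is to propagate the common factor downward in $R$ itself: since $F=\gcd((I_X)_t)$ by your first part, a descending induction (if $(I_X)_{s+1}\subseteq F\cdot R_{s+1-d}$ and $G\in(I_X)_s$, then $F$ divides both $\ell'G$ and $\ell''G$ for independent linear forms $\ell',\ell''$, hence divides $G$) gives $(I_X)_s=F\cdot(I_{W_2})_{s-d}$ for all $s\leq t$. Restricting to the general line and using $\bar I_t=\bar F S_{t-d}$, this yields $\overline{I_{W_2}}_u=(\bar I:\bar F)_u$ for $u\leq t-d$, hence $\Delta h_{W_2}(u)=\max\{\Delta h_X(u+d)-d,0\}$ in that range, while $\overline{I_{W_2}}_{t-d}=S_{t-d}$ forces $\Delta h_{W_2}(u)=0$ for all $u\geq t-d$: this is exactly the $W_2$ formula. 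Then your linkage sequence in large degrees gives $\deg W_1=\deg X-\deg W_2=\sum_s\min\{\Delta h_X(s),d\}$, and this equality of sums combined with your termwise bound forces $\Delta h_{W_1}=\min\{\Delta h_X,d\}$. (A minor further point: your parenthetical claim that $X=W_1\sqcup W_2$ as sets is false for non-reduced $X$, whose supports may overlap, but nothing in the argument uses it.)
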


We will see precisely the possibilities that occur for the first infinitesimal neighborhood of nine points, and we will see that there is in each case a maximum and minimum
Hilbert function. All together, there occur
eight Hilbert functions for schemes $X=p_1+\cdots+p_9$.
We give their difference functions, and the possible Hilbert functions that occur for double point schemes
$2X$, in the following theorem.

\begin{thm}\label{9 pts}
The following table lists all possibilities for the difference function for nine double points, in terms of the difference function of the underlying nine points.  In particular, for each ${\underline h}$, both ${\underline h}^{max}$ and ${\underline h}^{min}$ exist, and we indicate by ``max'' or ``min" the function that achieves ${\underline h}^{max}$ or ${\underline h}^{min}$, respectively, for each $\underline{h}$.  Of course when we have ``max = min," the Hilbert function of $2X$ is uniquely determined by that of $X$.

\addtocounter{thm}{1}
\begin{equation} \label{hfs of nine points}
\begin{array}{l|l|lccccccccccccccccccccccccccccccccc}
\hbox{\rm difference function of $X$} & \hbox{\rm possible difference functions of $2X$} & \hbox{\rm max/min} \\ \hline
\verb! 1 1 1 1 1 1 1 1 1! & \verb! 1 2 2 2 2 2 2 2 2 2 1 1 1 1 1 1 1 1! & \hbox{\rm max = min} \\ \hline
\verb! 1 2 1 1 1 1 1 1! & \verb! 1 2 3 4 2 2 2 2 2 1 1 1 1 1 1 1! & \hbox{\rm max = min} \\ \hline
\verb! 1 2 2 1 1 1 1! & \verb! 1 2 3 4 4 3 2 2 1 1 1 1 1 1! & \hbox{\rm max = min} \\ \hline
\verb! 1 2 2 2 1 1! & \verb! 1 2 3 4 4 4 3 2 1 1 1 1! & \hbox{\rm max = min} \\ \hline 
\verb! 1 2 2 2 2! & \verb! 1 2 3 4 4 4 4 2 2 1!  & \hbox{\rm max} \\
& \verb! 1 2 3 4 4 4 3 2 2 2! & \hbox{\rm min} \\ \hline
\verb! 1 2 3 1 1 1! & \verb! 1 2 3 4 5 5 2 1 1 1 1 1! & \hbox{\rm max = min} \\ \hline
\verb! 1 2 3 2 1! & \verb! 1 2 3 4 5 6 4 2!  & \hbox{\rm max} \\
& \verb! 1 2 3 4 5 6 3 2 1! \\
& \verb! 1 2 3 4 5 6 3 1 1 1! \\
& \verb! 1 2 3 4 5 6 2 2 1 1! & \hbox{\rm min}
\\ \hline
\verb! 1 2 3 3! & \verb! 1 2 3 4 5 6 6! & \hbox{\rm max} \\
& \verb! 1 2 3 4 5 6 5 1! \\
& \verb! 1 2 3 4 5 6 4 2! \\
& \verb! 1 2 3 4 5 6 3 3! \\
& \verb! 1 2 3 4 5 5 4 3! & \hbox{\rm min}
\end{array}
\end{equation}

\end{thm}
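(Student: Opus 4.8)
The plan is to treat the statement as two linked classification problems: first pin down the eight possible difference functions $\Delta h_X$, and then, for each of them, determine exactly which difference functions of $2X$ can occur. For the first step I would invoke the classification of \cite{GMR}: the difference function of a reduced set of points in \pr2 is a Castelnuovo function, i.e.\ a sequence that rises by exactly one ($1,2,3,\dots$) up to a peak value and is weakly decreasing thereafter. Requiring such a sequence to sum to $9$ (the number of points) leaves only finitely many possibilities, and a short enumeration organized by peak value ($1$, $2$, or $3$; peak $4$ already forces the sum to be at least $10$) produces precisely the eight sequences in the left-hand column. For each one I would then record the geometry it forces on $X$ by means of Theorem \ref{davis thm}: long constant stretches in $\Delta h_X$ (the value $1$ repeated, or the value $2$ repeated) mean that a line, respectively a conic, passes through a prescribed number of the points, and it is this incidence data that will drive the computation of $\Delta h_{2X}$.

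For a fixed $\Delta h_X$, I would determine the possibilities for $\Delta h_{2X}$ using the four tools (a)--(d). First, Lemma \ref{reg lemma} caps the last degree in which $\Delta h_{2X}$ is nonzero at $2r+1$, where $r+1=\hbox{\rm reg}(I_X)$, i.e.\ $r$ is the last degree in which $\Delta h_X$ is nonzero. Second, since $I_{2X}$ carries no forms in low degrees, $\Delta h_{2X}$ must begin with a maximal-growth segment $1,2,3,\dots$ that persists until the first curve through $2X$ appears; combined with the B\'ezout constraints coming from the fixed curves through $X$ (a line meeting a plane curve of degree $t$ in more than $t$ points, counted with the multiplicity forced by the double structure, must be a component, and similarly for a conic) this pins down the value of $\Delta h_{2X}$ in all but a handful of degrees. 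Third, the total $\sum_t \Delta h_{2X}(t)=27=9\binom{3}{2}$ constrains the remaining ``tail.'' At this stage only finitely many candidate completions survive, and I would list them explicitly for each row.

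To finish a given row I would prune the surviving candidates and then realize the genuine ones. Pruning again uses Theorem \ref{davis thm}: a candidate in which $\Delta h_{2X}$ repeats a value $d$ in two consecutive degrees forces a common factor of degree $d$ and a subscheme of $2X$ supported on that curve whose difference function is the corresponding truncation; when this is incompatible with the incidences already established for $X$, the candidate is ruled out. For the candidates that remain I would exhibit an explicit configuration of nine points having the prescribed $\Delta h_X$ and realizing the claimed $\Delta h_{2X}$, choosing the collinearity, conic, and cubic incidences dictated by the row and computing the resulting function directly. Finally, the existence of ${\underline h}^{max}$ is already guaranteed by \cite{GMS} and is attained on the generic member of the (irreducible) family, giving the top line of each block; to establish ${\underline h}^{min}$ I would single out the occurring function with the smallest partial sums (equivalently, the largest $\dim I_{2X}$ in every degree), verify that it lies below every other occurring function degree by degree, and note that it is realized by one of the special configurations just constructed.

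The main obstacle is the two densest rows, $\Delta h_X=(1,2,3,2,1)$ and $\Delta h_X=(1,2,3,3)$, where the initial ramp reaches $6$ and the geometry of $X$ (nine points in nearly general position on a cubic) is richest. Here the tail of $\Delta h_{2X}$ is genuinely non-unique, several candidates survive the numerical constraints of (a)--(c), and both the Davis-theorem elimination of the spurious tails and the construction of nine-point configurations hitting each surviving function exactly are delicate; these are precisely the cases that account for the multiple rows of the table and for the separation of ${\underline h}^{max}$ from ${\underline h}^{min}$.
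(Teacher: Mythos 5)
Your proposal follows essentially the same route as the paper's proof: enumerate the eight admissible $\Delta h_X$ via \cite{GMR} (maximal growth to a peak, then weakly decreasing, summing to $9$), then for each row combine Lemma \ref{reg lemma}, B\'ezout arguments, the degree count $\sum_t\Delta h_{2X}(t)=27$, and Theorem \ref{davis thm} to cut the candidates down, ruling out spurious tails by the Davis-type incidence contradictions and realizing the survivors by explicit configurations, with ${\underline h}^{max}$ coming from \cite{GMS} and the generic member of the family. You also correctly isolate the rows $(1,2,3,2,1)$ and $(1,2,3,3)$ as the genuinely delicate cases, which is exactly where the paper expends its effort (Claims \ref{old claim 7.1} and 8.1 and the explicit line/conic/cubic configurations).
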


\begin{proof} 
One has to ``integrate" the difference functions in order to verify the claims about ${\underline h}^{max}$ or ${\underline h}^{min}$.  We leave this to the reader.  The fact that the eight Hilbert functions listed above for $X$ give a complete list is standard, and we omit the proof.

\bigskip

\noindent \underline{Case 1}: \verb! 1 1 1 1 1 1 1 1 1!.  If $X$ has this  difference function then $X$ must be a set of 9 collinear points in $\pr2$.  Such a set of points is necessarily a complete intersection, so it is easy to check that the difference function for $2X$ is the one claimed.  (Even the minimal free resolution is well-known.)

\bigskip

\noindent \underline{Case 2}: \verb! 1 2 1 1 1 1 1 1!.  If $X$ has this difference function then $X$ must consist of 8 points on a line and one point off the line (it follows from Theorem \ref{davis thm}).  It is not hard to check, using B\'ezout arguments, that then $2X$ has the claimed difference function.

\bigskip

\noindent \underline{Case 3}:  \verb! 1 2 2 1 1 1 1!.  If $X$ has this difference function then $X$ must consist of seven points on a line, say $\lambda_1$, and two points off the line (again using Theorem \ref{davis thm}).  Let $Q_1, Q_2$ be these latter points.  We will see that the Hilbert function is independent of whether  $Q_1$ and $Q_2$ are collinear with one of the seven other points or not.  Note first that $2X$ contains a subscheme of degree 14 lying on a line.  Hence the regularity is $\geq 14$, so the difference function ends in degree $\geq 13$.  

Let $L_1$ be a linear form defining $\lambda_1$ and let $L_2$ be a linear form defining the line joining $Q_1$ and $Q_2$.  Using B\'ezout's theorem, it is clear that there is no form of degree $\leq 3$ vanishing on $2X$.  Furthermore, $L_1^2 L_2^2$ is the only form (up to scalar multiples) of degree 4 vanishing on $2X$.  Now, in degree 5 we have that $L_1^2$ is a common factor for all forms in the ideal of $2X$.  Hence $ (I_{2Q_1 + 2Q_2})_3 \cong (I_{2X})_5 $, where the isomorphism is obtained by multiplying by $L_1^2$.  But $2Q_1 + 2Q_2$ imposes independent conditions on forms of degree 3, so these isomorphic components have dimension $10 - 6 = 4$.  

The calculations above give the claimed difference function up to degree 5.  But the sum of the terms of the difference function has to equal 27 ($= \deg 2X$), and the terms past degree 5 must be non-increasing and positive and non-zero through degree 13.  Using also Lemma \ref{reg lemma} (which implies that the difference function must be zero no later than degree 14), this is enough to force the claimed difference function.

\bigskip

\noindent \underline{Case 4}: \verb! 1 2 2 2 1 1!.  By Theorem \ref{davis thm}, $X$ must consist of six points, $X_1$, on a line, $\lambda_1$,  and three collinear points, $X_2$, on another line, $\lambda_2$.  The intersection of $\lambda_1$ and $\lambda_2$ may or may not be a point of $X_1$; it is not a point of $X_2$.  We will see, as in Case 3, that this combinatorial distinction does not affect the Hilbert function of $2X$.  Pictorially we have the following two possibilities:

\begin{picture}(200,90)
\put (80,18){\line (1,0){105}}
\put (65,17){$\scriptstyle \lambda_1$}
\put (92,15){$\bullet$}
\put (107,15){$\bullet$}
\put (122,15){$\bullet$}
\put (137,15){$\bullet$}
\put (152,15){$\bullet$}
\put (167,15){$\bullet$}
\put (80,38){\line (1,0){90}}
\put (65,37){$\scriptstyle \lambda_2$}
\put (92,35){$\bullet$}
\put (107,35){$\bullet$}
\put (122,35){$\bullet$}
\put (104,46){$\scriptstyle X_2$}
\put (129,4){$\scriptstyle X_1$}

\put (280,18){\line (1,0){105}}
\put (265,17){$\scriptstyle \lambda_1$}
\put (292,15){$\bullet$}
\put (307,15){$\bullet$}
\put (322,15){$\bullet$}
\put (337,15){$\bullet$}
\put (352,15){$\bullet$}
\put (367,15){$\bullet$}
\put (322,66){\line(1,-1){60}}
\put (330,53){$\bullet$}
\put (340,43){$\bullet$}
\put (350,33){$\bullet$}
\put (307,66){$\scriptstyle \lambda_2$}
\put (350,52){$\scriptstyle X_2$}
\put (329,4){$\scriptstyle X_1$}
\end{picture}

Combining  Lemma \ref{reg lemma} with the fact that $2X$ contains a subscheme of degree 12 on a line, we get that the difference function of $2X$ ends in degree exactly 11.  Using B\'ezout it is not hard to check  that 
\[
\begin{array}{lll}
h^0({\mathcal I}_{2X}) = h^0({\mathcal I}_{2X}(1)) = h^0({\mathcal I}_{2X}(2)) = h^0({\mathcal I}_{2X}(3)) = 0 \\
h^0({\mathcal I}_{2X}(4)) = 1 \\
h^0({\mathcal I}_{2X}(5)) = h^0({\mathcal I}_{2X_2}(3)) = h^0({\mathcal I}_{X_2}(2)) = 3 \\
h^0({\mathcal I}_{2X}(6)) = h^0({\mathcal I}_{2X_2}(4)) = h^0({\mathcal I}_{X_2}(3)) = 7.
\end{array}
\]
This means that the difference function of $2X$ begins \verb!1 2 3 4 4 4 3 ...! and arguing as in Case 3 gives the result.

\bigskip

\noindent \underline{Case 5}: \verb! 1 2 2 2 2!.  This case corresponds to nine points on a reduced conic curve.  There are three possibilities.  If the conic is smooth then the nine points are arbitrary.  If the conic consists of two lines then this case takes the form of five points on one line and four points on the other line.  Here we can have (a) none of the nine points is the point of intersection of the two lines, or (b) one of the five points is the point of intersection.  All of these cases have been studied in \cite{GHM}, and we omit the details.

\bigskip

\noindent \underline{Case 6}: \verb! 1 2 3 1 1 1!.  Now $X$ consists of six point on a line plus three non-collinear points off the line.  It is easy to check, using the same methods, that there is only one possibility for the Hilbert function of $2X$, independent of whether the line through two of the non-collinear points meets one of the six collinear points or not.  We omit the details.

\bigskip

\noindent \underline{Case 7}: \verb! 1 2 3 2 1!.  By  Lemma \ref{reg lemma}, the difference function for $2X$ ends in degree $\leq 9$ and the entries again add up to 27.  Furthermore, it is not hard to see that $X$ has at most 5 points on a line, and $X$ has at most one set of 5 collinear points.  

The first main step in the proof is the following assertion:

\medskip

 
 \begin{claim} \label{old claim 7.1}
 $h^0({\mathcal I}_{2X}(5)) = 0$.  
 \end{claim}

Note that this implies that $h^0({\mathcal I}_{2X}(t)) = 0$ for $t \leq 5$.  Suppose that there is a curve $F$, of degree 5 containing $2X$.  There are several possibilities.  By abuse of notation we will denote by $F$ also a form defining this curve.

\begin{itemize}
\item $F$ is reduced.  Then $F$ has to contain 9 singular points, which form the points of $X$ (and hence have the difference function \verb!1 2 3 2 1!).  This can happen in one of two ways:

\begin{itemize}
\item $F$ consists of the union of five lines, and $X$ consists of nine of the resulting ten double points.  But from B\'ezout we note that the 10 double points do not lie on a cubic curve (since each of the five  lines would have to be a component of the cubic), so the ten points have difference function \verb!1 2 3 4!, and hence $X$ cannot have difference function \verb!1 2 3 2 1!.

\item $F$ consists of the union of three lines and a smooth conic, and $X$ consists of all nine resulting double points.  Now the three lines have to be components of any cubic containing $X$, so there is a unique such cubic, and again $X$ does not have difference function \verb!1 2 3 2 1!.
\end{itemize}

\item $F$ has a double conic.  Then all the singular points of $F$ must lie on this conic.  But, $X$ does not lie on a conic, so this is impossible.

\item $F$ has a double line, i.e. $F = L^2G$, $\deg G = 3$.  Then $G$ contians at most 3 singular points of $F$.   This forces the remaining 6 singular points to be on the line, contradicting the fact that at most 5 points of $X$ can lie on a line.
\end{itemize}

\noindent This concludes the proof of Claim \ref{old claim 7.1}.

\medskip

Thanks to Claim \ref{old claim 7.1}, we now know that the difference function for $2X$ has the form
\[
\verb! 1  2  3  4  5  6! \qed \qed \qed \qed
\]
where the last four spaces correspond to entries that are $\geq 0$ and add up to $27 - 21 = 6$.  
Now notice that there is an irreducible flat family of subschemes of degree 9 with difference function \verb!1 2 3 2 1!   \cite{ellingsrud}, and the general such is a complete intersection of two cubics.  The difference function for the corresponding scheme $2X$ is easily checked to be  \verb!1 2 3 4 5 6 4 2!.  It follows that not only does this difference function exist, but in fact it corresponds to ${\underline h}^{max}$.  (See also \cite[Remark 7.4]{GMS}.)
In particular, \verb!1 2 3 4 5 6 6! and \verb!1 2 3 4 5 6 5 1! do not occur.  The following, then, are the remaining possibilities for the difference function of $2X$:

\begin{enumerate}
\item \verb! 1 2 3 4 5 6 4 2!

\item \verb! 1 2 3 4 5 6 4 1 1!

\item \verb! 1 2 3 4 5 6 3 3!

\item \verb! 1 2 3 4 5 6 3 2 1!

\item \verb! 1 2 3 4 5 6 3 1 1 1!

\item \verb! 1 2 3 4 5 6 2 2 2!

\item \verb! 1 2 3 4 5 6 2 2 1 1!
\end{enumerate}

\noindent For each of these we will either give a specific example (that the reader can verify directly, either by hand or on a computer program) or a proof of non-existence.

\begin{enumerate}
\item \verb! 1 2 3 4 5 6 4 2! .  As we saw above, this occurs when $X$ is the complete intersection of two cubics, and this corresponds to ${\underline h}^{max}$.

\item \verb! 1 2 3 4 5 6 4 1 1!.  This does not exist.  Indeed, this difference function forces the existence of a line $\lambda$ that contains a subscheme of $2X$ of degree 9, which is impossible.  (Any such subscheme must have even degree.)

\item \verb! 1 2 3 4 5 6 3 3!.  This does not exist in our context.  Note that it {\em does} exist when $X$ has difference function \verb!1 2 3 3!, as we will verify below.
To see that this does not exist, note that by Theorem \ref{davis thm},  the \verb!3 3! at the end forces the existence of a cubic curve $C$ that cuts out from $2X$ a subscheme $W$ of degree 21 with difference function \verb!1 2 3 3 3 3 3 3!.  Observe that if $P$ is a point of $X$ which is a smooth point of $C$, then $C$ cuts out a non-reduced point of degree 2 at $P$.  If $P$ is a point of $X$ which is a singular point of $C$, then $C$ contains the fat point $2P$ (which has degree 3). 
Note also that our $h$-vector does not permit the existence of a subscheme of degree more 
than 8 on a line. 

Suppose first that $C$ is reduced.  Since we only have the nine points of $X$ to work with, it is not hard to check, using the above observation, that the only way that $C$ can cut out from $2X$ a subscheme of degree 21 is if $X$ has the following configuration:

\begin{picture}(200,90)
\put (130,0){\line (1,1){65}}
\put (240,0){\line (-1,1){65}}
\put (130,15){\line(1,0){110}}
\put (142,12){$\bullet$}
\put (170,12){$\bullet$}
\put (194,12){$\bullet$}
\put (222,12){$\bullet$}
\put (157,27){$\bullet$}
\put (170,40){$\bullet$}
\put (182,52){$\bullet$}
\put (194,40){$\bullet$}
\put (207,27){$\bullet$}
\end{picture}

\bigskip

\noindent But this uses all nine points, and its support lies on a {\em unique} cubic, contradicting the fact that $X$ has difference function \verb!1 2 3 2 1!.  This configuration provides one of the correct difference functions for \verb!1 2 3 3! below.

Now suppose that $C$ is not reduced.  Without loss of generality, $C$ has a double line.  The difference function for $X$ would, in principle, allow five points of $X$ to lie on a line, but because the hypothetical difference function for the subscheme $W$ ends in degree 7, in fact at most four points of $X$ can lie on a line.  So the double line contains at most four fat points of $2X$, which have degree 12.  In order for $C$ to cut out a subscheme of degree 21, then, we must have a reduced line that cuts out an additional subscheme of degree at least 9.  This forces at least five points of $X$ to be collinear, which again is impossible.

\item \verb! 1 2 3 4 5 6 3 2 1! .  This difference function does exist.  It occurs when $X$ is the union of one point and the complete intersection of a conic and a general quartic curve.  

\item \verb! 1 2 3 4 5 6 3 1 1 1! . This difference function does exist.  It occurs when $X$ is the union of five general points on a line, three general points on a second line, and one additional general point off both lines.

\item \verb! 1 2 3 4 5 6 2 2 2!.  This difference function does not exist.  Indeed, suppose that it did exist.  Because of the \verb!2 2 2!, there must be a curve $C$ of degree 2 that cuts out on $2X$ a subscheme $W$ of degree 17 having difference function \verb!1 2 2 2 2 2 2 2 2!.  

First note that $X$ cannot contain five points on a line (and hence a subscheme of $W$ of degree at least 10) since the hypothetical difference function ends in degree 8. Now consider cases.

\begin{enumerate}
\item $C$ is smooth: then it cannot cut out a subscheme of odd degree.

\item $C$ is reduced and reducible: then we cannot obtain the desired subscheme $W$ of degree 17 unless $X$ contains 5 points on a line, in which case $W$ contains a subscheme of degree at least 10 on that line.

\item $C$ non-reduced: then we cannot have a subscheme of degree 17 supported on that line.
\end{enumerate}

\item \verb! 1 2 3 4 5 6 2 2 1 1!.  This difference function does exist.  It occurs when $X$ has the following configuration:

\begin{picture}(200,90)
\put (133,3){\line (1,1){65}}
\put (243,-3){\line (-1,1){71}}
\put (145,15){$\bullet$}
\put (182,0){$\bullet$}
\put (219,15){$\bullet$}
\put (157,27){$\bullet$}
\put (170,40){$\bullet$}
\put (182,52){$\bullet$}
\put (194,40){$\bullet$}
\put (207,27){$\bullet$}
\put (231,3){$\bullet$}
\end{picture}
\end{enumerate}

\bigskip

\noindent \underline{Case 8}: \verb! 1 2 3 3!. This is the difference function for a general set of nine points in $\pr2$.  We know (from \cite{anticanSurf}, for example) that the ``generic" difference function for nine general double points is \verb!1 2 3 4 5 6 6!.  Hence this occurs and corresponds to  the maximum possible Hilbert  function.  Clearly all other possibilities will end in degree $\geq 7$.  On the other hand,  Lemma \ref{reg lemma} guarantees that all other examples end in degree $\leq 7$.  Note that again, $X$ can have at most four points on a line.

\medskip

\underline{Claim 8.1}: $h^0 ({\mathcal I}_{2X}(5)) \leq 1$.

Notice that as a consequence of this claim we also obtain $h^0({\mathcal I}_{2X}(4)) = 0$.  Keeping in mind that it is possible that $h^0({\mathcal I}_{2X}(5)) = 0$ (e.g. the generic case), we will assume that $h^0({\mathcal I}_{2X}(5)) \neq 0$ and deduce that then it must be $=1$.  So let $C$ be a curve of degree 5 containing the scheme $2X$.  As before (Claim \ref{old claim 7.1}) there are a few possibilities.

\begin{itemize}
\item If $C$ is reduced then since it must have nine double points, it must consist of either the union of five lines, no three through a point, or the union of three lines and a smooth conic, with no three components meeting in a point.  By B\'ezout, each component of $C$ is then a fixed component of the linear system $|(I_{2X})_5|$, so the claim follows.

\item If $C$ contains a double line then at most four (fat) points of $2X$ lie on this line, so we must have a cubic curve that contains the remaining five double points.  Consider the support, $X_1$, of these five double points.  The points of $X_1$ are not collinear, and they do not have four collinear points since $X$ lies on only one cubic.  With these restrictions, clearly there is no cubic curve double at such a set of five points.

\item If $C$ contains a double conic (smooth or not), this conic contains at most seven points of $X$, because of the Hilbert function of $X$.  Hence $C$ must have a line that contains two double points, which is impossible.
\end{itemize}

This concludes the proof of Claim 8.1.

\medskip

It follows that the possibilities for the difference function of $2X$ are the following:

\begin{enumerate}
\item \verb! 1 2 3 4 5 6 6!

\item \verb! 1 2 3 4 5 6 5 1!

\item \verb! 1 2 3 4 5 6 4 2!

\item \verb! 1 2 3 4 5 6 3 3!

\item \verb! 1 2 3 4 5 5 5 2!

\item \verb! 1 2 3 4 5 5 4 3!
\end{enumerate}

\noindent As before, we examine these each in turn.

\begin{enumerate}
\item \verb! 1 2 3 4 5 6 6!. We have seen that this occurs generically.

\bigskip

\item \verb! 1 2 3 4 5 6 5 1!.  This exists, for instance from the following configuration: 



\bigskip

\item \verb! 1 2 3 4 5 5 5 2!  We claim that this does not exist.  The key is that such a double point scheme, $2X$, would have to lie on a unique quintic curve, say $C$.  To see that this is impossible, the argument is very similar to that of Claim \ref{old claim 7.1}, but with a small difference.  One checks as before that $C$ must consist either of five lines or the union of three lines and a conic, and in both cases we must have that no three components share a common point.  In the first case, $X$ consists of nine of the ten double points of $C$ (it does not matter which nine), and in the second case $X$ consists of all nine double points of $C$.  But in both of these cases one can check geometrically or on a computer that $h^0({\mathcal I}_{2X} (6)) = 4$, while the hypothetical difference function would require this dimension to be 3.

\bigskip

\item \verb! 1 2 3 4 5 5 4 3!  This exists, and can be achieved by the configuration mentioned above: it is supported on nine of the ten intersection points of five general lines in $\pr2$.

\end{enumerate}
\end{proof}

\section{Approach I: Points on Cubics}\label{ptsoncubics}

For this section we will always let $C \subset \pr2$ be an irreducible cubic curve
defined by a polynomial $F$ of degree 3. Let $X$ be a 
reduced set of $n = 3t + \delta$ points on $C$, where 
$0 \leq \delta \leq 2$.   Let $Z= 2X$ be the double point scheme 
in $\pr2$ supported on $X$.  
The object of this section is to describe the possible Hilbert functions of 
$X$ and of the corresponding $Z$.  In some instances we assume that $t$ 
is ``big enough" (with mild bounds), and in one instance (Theorem \ref{smooth cubic}(b)) 
we assume that the points are not too special and that $C$ is smooth.

\bigskip

\begin{prop} \label{ci case}  Assume that $\delta = 0$, $t \geq 3$, and the Hilbert function of $X$ has first difference 
\addtocounter{thm}{1}
\begin{equation} \label{hf  of 1 2 3 ... 3 2 1}
\begin{array}{l|ccccccccccccccccccccccccccccccccccc}
\deg & 0 & 1 & 2  & \dots & t -1 & t & t+1 & t+2   \\
\hline
\Delta h_X & 1 & 2 & 3  & \dots & 3 & 2 & 1 & 0 
\end{array}
\end{equation}
(where the values between $2$ and $t-1$, if any, are all 3).  Then  $X$ is a complete intersection with ideal $(F,G)$, where 
$\deg F = 3$ and $\deg G = t$.  Furthermore, if $C$ is singular 
then the singular point is not a point of $X$.  Assume that $t>3$, 
so that $t+3 > 6$ and $2t > t+3$. Then we have the first difference of the Hilbert function of  $Z$ is

\medskip

\begin{itemize}

\item[($t=3$)]  $1 \ 2 \ 3 \ 4 \ 5 \ 6 \ 4 \ 2 \ 0$;

\item[($t=4$)]  $1 \ 2 \ 3 \ 4 \ 5 \ 6 \ 6 \ 5 \ 3 \ 1 \ 0$; 

\item[($t=5$)] $1 \ 2 \ 3 \ 4 \ 5 \ 6 \ 6 \ 6 \ 5 \ 4 \ 2 \ 1 \ 0$; 

\item[($t \geq 6$)] 
{\small 
\[
\begin{array}{l|ccccccccccccccccccccccccccccccccccc}
\deg & 0 & 1 & 2 & 3 & 4 & 5 & 6 & \dots & t +2 & t+3 & t+4 & t+5 & \dots & 2t-1 & 2t & 2t+1 & 2t+2   \\
\hline
\Delta h_Z & 1 & 2 & 3 & 4 & 5 & 6 & 6 & \dots & 6 & 5 & 4 & 3 & \dots & 3 & 2 & 1 & 0 
\end{array}
\]
}
\end{itemize}

\end{prop}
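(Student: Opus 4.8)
The plan is to prove the three assertions in order: first the complete intersection structure, then the statement about the singular point (which will fall out of the structure), and finally the Hilbert function of $Z=2X$, which reduces to a clean convolution once the first part is in hand.

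First I would establish that $X$ is a complete intersection. Integrating \eqref{hf  of 1 2 3 ... 3 2 1} gives $h_X(t)=3t-1$, so $\dim (I_X)_t=\binom{t+2}{2}-(3t-1)$. Since $X\subset C=V(F)$ we have $F\in I_X$, and as $F$ is irreducible multiplication by $F$ is injective, so the multiples of $F$ form a subspace $F\cdot R_{t-3}\subseteq (I_X)_t$ of dimension $\binom{t-1}{2}$. A direct count (using $\binom{t+2}{2}-\binom{t-1}{2}=3t$) yields
\[
\dim (I_X)_t-\dim\bigl(F\cdot R_{t-3}\bigr)=\Bigl[\binom{t+2}{2}-(3t-1)\Bigr]-\binom{t-1}{2}=1 ,
\]
valid for every $t\ge 3$. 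Hence there is a form $G$ of degree $t$ in $I_X$ that is not a multiple of $F$. Because $C$ is integral and $F\nmid G$, the forms $F$ and $G$ share no common factor, so by B\'ezout $V(F,G)$ is zero-dimensional of degree $3t$. Since $X\subseteq V(F,G)$, $X$ is reduced, and $\deg X=3t=\deg V(F,G)$, I conclude $X=V(F,G)$, a complete intersection with ideal $(F,G)$, $\deg F=3$, $\deg G=t$.

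Next I would deduce the statement about the singular point. An irreducible plane cubic has at most one singular point $p$, at which $\mathrm{mult}_p F\ge 2$. If $p\in X=V(F,G)$ then $G(p)=0$, so the local intersection multiplicity satisfies $\mathrm{mult}_p(F)\cdot\mathrm{mult}_p(G)\le (F\cdot G)_p$, giving $(F\cdot G)_p\ge 2$; but $X$ is reduced, forcing $(F\cdot G)_p=1$, a contradiction. Hence $p\notin X$.

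Finally, for $h_Z$ the decisive input is that for a complete intersection the symbolic and ordinary powers coincide, so $I_Z=I_X^{(2)}=I_X^{2}$; I would justify this by noting that the conormal sequence exhibits $R/I_X^2$ as an extension of depth-one modules, whence $I_X^2$ is saturated. For the complete intersection $I=(F,G)$ the conormal module is free over $R/I$, namely $I/I^2\cong (R/I)(-3)\oplus(R/I)(-t)$, so $\dim(I/I^2)_s=h_X(s-3)+h_X(s-t)$. Combining $0\to I^2\to I\to I/I^2\to 0$ with $0\to I\to R\to R/I\to 0$ gives the clean formula
\[
h_Z(s)=h_X(s)+h_X(s-3)+h_X(s-t),
\]
equivalently $\Delta h_Z(s)=\Delta h_X(s)+\Delta h_X(s-3)+\Delta h_X(s-t)$. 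The tabulated answers then follow mechanically by superimposing three copies of the sequence $1\,2\,3\,\cdots\,3\,2\,1$ shifted by $0$, $3$, and $t$; one checks the entries sum to $9t=\deg Z$ and reads off each case. The hypotheses $t+3>6$ and $2t>t+3$ are exactly what guarantees the three blocks overlap in the generic pattern giving the $t\ge 6$ table, while the cases $t=3,4,5$ are the degenerations where the shifted blocks overlap more heavily (for $t=3$ the shifts by $3$ and by $t$ even coincide, producing $1\,2\,3\,4\,5\,6\,4\,2\,0$). The one genuinely non-formal step is the identification $I_Z=I_X^2$ together with the freeness of the conormal module; once these standard complete-intersection facts are granted, everything else is a routine combinatorial convolution, and the apparent case distinction is merely an artifact of how the three blocks meet.
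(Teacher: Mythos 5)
Your proof is correct, and its first two steps are essentially the paper's own: you produce a degree-$t$ form $G\in I_X$ not divisible by $F$ (by a dimension count, where the paper instead observes that $F$ generates $I_X$ through degree $t-1$ with exactly one new form in degree $t$), then conclude $I_X=(F,G)$ by B\'ezout plus a degree comparison, and rule out the singular point because a reduced complete intersection cannot pass through a point where $F$ has multiplicity $\geq 2$. Where you genuinely diverge is the computation of $h_Z$. The paper identifies $I_Z=(F^2,FG,G^2)$ and invokes liaison: $(F,G)$ is linked to $I_Z$ by the complete intersection $(F^2,G^2)$, and the Hilbert function of the link is read off from the formula of Davis--Geramita--Orecchia, with details omitted. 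You instead prove $I_Z=I_X^2$ directly --- the depth argument via the conormal sequence is exactly the right justification here, since the paper defines $I_Z$ as the saturation of $I_X^2$ --- and then use freeness of the conormal module, $I_X/I_X^2\cong (R/I_X)(-3)\oplus (R/I_X)(-t)$, to obtain the convolution identity $\Delta h_Z(s)=\Delta h_X(s)+\Delta h_X(s-3)+\Delta h_X(s-t)$. This buys a uniform treatment: all four tables, including the low cases $t=3,4,5$, drop out of one closed formula valid for every $t\geq 3$ (I checked each table against it; they match), and the degree check $\sum_s\Delta h_Z(s)=9t$ is automatic. What the paper's linkage route buys is brevity for a reader who already has the liaison machinery; your route is more self-contained and makes explicit the step the paper dismisses as ``simple and standard.'' Two small points you should state rather than leave implicit, since both are load-bearing: the passage from the scheme-theoretic equality $X=V(F,G)$ to the ideal-theoretic equality $I_X=(F,G)$ uses that complete intersection ideals are saturated (being unmixed), and the freeness of $I_X/I_X^2$ is precisely where the regular-sequence property of $F,G$ enters.
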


\begin{proof}
We first show that $X$ must be a complete intersection.  From the 
Hilbert function (\ref{hf of 1 2 3 ... 3 2 1}), it is clear that $F$ is a 
factor of every form in $I_X$ up to degree $t-1$, and that in fact 
it generates the ideal up to this point.  In degree $t$ there is 
exactly one  new form, $G$, in the ideal, and since $F$ is 
irreducible, $F$ and $G$ have no common factor.  But $(F,G)$ 
is a saturated ideal that is contained in $I_X$ and defines a 
zero-dimensional scheme of the same degree as $X$, hence $I_X = (F,G)$.

Since $X$ is a complete intersection, if $C$ is singular and 
$P \in X$ is the singular point of $C$, then $X$ must be 
non-reduced at $P$, contradicting our assumption.  

Now, it is a simple (and standard) argument that $I_Z = (F^2,FG,G^2)$, 
and one can verify  the claimed  Hilbert function of $R/I_Z$, 
for instance by using the fact that $(F,G)$ is directly linked 
to the ideal of $Z$ by the complete intersection $(F^2,G^2)$, and using the formula for the behavior of Hilbert functions under linkage \cite{DGO} (see also \cite{migliore}).  We omit the details.
\end{proof}

\bigskip

Because the form $F$ of least degree is irreducible, the Hilbert function 
of $X$ has first difference that is strictly decreasing from the first 
degree where it has value $< 3$ until it reaches 0.  Having 
proved Proposition \ref{ci case}, we can now assume without 
loss of generality that the Hilbert function of $X$ has first difference
\addtocounter{thm}{1}
\begin{equation} \label{hf of 1 2 3 ... 3}
\begin{array}{l|ccccccccccccccccccccccccccccccccccc}
\deg & 0 & 1 & 2 & 3 & \dots & t & t+1 & t+2   \\
\hline
\Delta h_X & 1 & 2 & 3 & 3 & \dots & 3 & \delta & 0
\end{array}
\end{equation}
where $0 \leq \delta \leq 2$.

\begin{thm} \label{smooth cubic}
Assume that either $C$ is smooth, or else that no point of $X$ is 
the singular point of $C$.  Assume further that $t > 5-\delta$.  
Then the Hilbert function of the double point scheme $Z$ supported on $X$ is
{\small 
\[
\begin{array}{l|ccccccccccccccccccccccccccccccccccc}
\deg & 0 & 1 & 2 & 3 & 4 & 5  & \dots & t+3 & t+4 & t+5 & \dots & 2t+\delta-1 & 2t+\delta   \\
\hline
\Delta h_Z & 1 & 2 & 3 & 4 & 5 & 6 &  \dots & 6 & 3+\delta & 3 & \dots & 3 & ??
\end{array}
\]
}
For the behavior in degree $\geq 2t+\delta$, we have the following conclusions.

\begin{itemize}
\item[(a)] If $\delta = 1$ or $\delta = 2$ then $\Delta h_Z (2t+\delta) = 3-\delta$ and $\Delta h_Z(k) = 0$ for $k > 2t+\delta$.

\item[(b)] If $\delta = 0$, there are two possible Hilbert functions, these being determined by 

\begin{itemize}
\item[i.] $\Delta h_Z(2t) = 3$ and $\Delta h_Z(k) = 0$ for $k > 2t$, and 

\item[ii.] $\Delta h_Z(2t) = 2, \Delta h_Z(2t+1) = 1, \Delta h_Z(2t+2) = 0$. 

\end{itemize}

\noindent  Moreover, if the points $p_i$ are sufficiently general and $C$ is smooth, then the Hilbert function is the first of these two.

\end{itemize}

\end{thm}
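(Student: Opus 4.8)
The plan is to pin down $\Delta h_Z$ by comparing $I_{2X}$ with $I_X$ through multiplication by the cubic form $F$, reserving the subtle tail in degrees $\ge 2t+\delta$ for a Riemann--Roch computation on $C$. The one place where the hypothesis enters is the identification of the residual scheme. Because every $p_i$ is a smooth point of $C$, the form $F$ is part of a regular system of parameters at $p_i$, so locally $FG\in I_{2X}$ iff $G$ vanishes at $p_i$; globally this says $\mathcal{I}_{2X}:F=\mathcal{I}_X$. Since $2X\cap C$ is the divisor $W=2\sum_i p_i$ of degree $2n=6t+2\delta$ on $C$, restriction to $C$ then yields the exact sequence of sheaves
\[
0 \longrightarrow \mathcal{I}_X(d-3)\xrightarrow{\ \cdot F\ }\mathcal{I}_{2X}(d)\longrightarrow \mathcal{O}_C(d)(-W)\longrightarrow 0 .
\]
Taking cohomology, the cokernel of $(I_{2X})_d\to H^0\!\big(C,\mathcal{O}_C(d)(-W)\big)$ injects into $H^1(\mathcal{I}_X(d-3))$, whose dimension is $n-h_X(d-3)$.

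The bulk of the answer is then pure Bézout. If $G\in(I_{2X})_d$ is not divisible by $F$, then $G|_C$ is a nonzero effective divisor of degree $3d$ on the irreducible cubic containing $W$, so $3d\ge 2n$, i.e. $d\ge 2t+\delta$. Hence for $d\le 2t+\delta-1$ every form in $(I_{2X})_d$ is a multiple of $F$, the sequence collapses to $(I_{2X})_d=F\cdot(I_X)_{d-3}$, and $\dim(I_{2X})_d=\dim(I_X)_{d-3}$. A one-line computation gives $\Delta h_Z(d)=3+\Delta h_X(d-3)$ for $2\le d\le 2t+\delta-1$; substituting the shape of $\Delta h_X$ (the reduction made after Proposition \ref{ci case}) reproduces exactly the displayed values $1,2,3,4,5,6,\dots,6,\,3+\delta,\,3,\dots,3$ through degree $2t+\delta-1$. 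Since $\sum_d\Delta h_Z(d)=\deg Z=3n$, the mass remaining in degrees $\ge 2t+\delta$ is exactly $3-\delta$.

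It remains to distribute this mass. For $d\ge 2t+\delta$ one has $d-3\ge t+1$, so $h_X(d-3)=n$, $H^1(\mathcal{I}_X(d-3))=0$, and the restriction is surjective:
\[
\dim(I_{2X})_d=\dim(I_X)_{d-3}+h^0\!\big(\mathcal{O}_C(d)(-W)\big).
\]
The line bundle $A_d=\mathcal{O}_C(d)(-W)$ has degree $3d-2n$, and on the integral arithmetic-genus-one curve $C$ (Gorenstein, $\omega_C\cong\mathcal{O}_C$) Riemann--Roch with duality gives $h^0(A_d)=3d-2n$ whenever $3d-2n>0$. For $\delta=1,2$ the first relevant degree is $d=2t+\delta$, where $\deg A_d=\delta>0$; then $h^0(A_d)=\delta$, which forces $h_Z$ to reach $3n$ already at $d=2t+\delta$, giving $\Delta h_Z(2t+\delta)=3-\delta$ and $\Delta h_Z=0$ beyond --- this is (a). For $\delta=0$ the critical degree is $d=2t$ with $\deg A_{2t}=0$, and here the whole dichotomy lives: a degree-zero line bundle on an integral curve has $h^0=1$ if it is $\cong\mathcal{O}_C$ and $h^0=0$ otherwise. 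As $A_{2t}$ is trivial precisely when $2\sum_i p_i\sim 2t\,H$ on $C$, the two alternatives yield exactly $\Delta h_Z(2t)=3$ (then $0$), respectively $\Delta h_Z(2t)=2,\ \Delta h_Z(2t+1)=1$ (then $0$): the two functions of (b). For generic $p_i$ on a smooth $C$ the class $\sum_i p_i-t H$ is a non-torsion point of $\Pic^0(C)$, so $A_{2t}\not\cong\mathcal{O}_C$ and case (b)(i) occurs.

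The hard part will be the $\delta=0$ case. One must justify surjectivity of the restriction exactly at $d=2t$ --- this is where the bound $t>5-\delta$ does its work, guaranteeing $h_X(2t-3)=n$ --- and then read off the dichotomy from the triviality of $A_{2t}$ in $\Pic^0(C)$, including verifying that both alternatives genuinely occur (the nontrivial one needs $\sum_i p_i-tH$ to be a nonzero $2$-torsion class, which exists on an elliptic curve). For a nodal or cuspidal $C$ the same Riemann--Roch goes through via the Gorenstein structure, with $\Pic^0(C)\cong\GG_m$ or $\GG_a$; accordingly the dichotomy in (b) persists while the genericity conclusion is asserted only for smooth $C$.
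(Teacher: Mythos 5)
Your argument is correct, and it reaches the crux of the theorem---the tail in degrees $\geq 2t+\delta$---by a genuinely different route than the paper does. The B\'ezout step is the same (your inequality $d\geq 2t+\delta$ is the paper's Claim 1), and both proofs obtain the values through degree $2t+\delta-1$ from $(I_{2X})_d=F\cdot(I_X)_{d-3}$. From there, however, the paper stays combinatorial: it uses the total $\sum_d\Delta h_Z(d)=3n$, rules out flat tails such as $1,1,1$ via Davis' theorem (Theorem \ref{davis thm}), characterizes alternative (b)(ii) by the existence of a form $G\in (I_Z)_{2t}$ not divisible by $F$---which makes the divisor $2X$ on $C$ a complete intersection of type $(3,2t)$---and then invokes the group law on a smooth $C$ for the genericity claim, together with an explicit four-general-lines configuration to show (b)(ii) actually occurs. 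You instead compute every value in degrees $\geq 2t+\delta$ from the restriction sequence $0\to\mathcal{I}_X(d-3)\to\mathcal{I}_{2X}(d)\to\mathcal{O}_C(d)(-W)\to 0$, $W=2\sum_ip_i$ (your local check at the smooth points $p_i$ is exactly what exactness requires), using $h^1(\mathcal{I}_X(d-3))=0$ for surjectivity and Riemann--Roch plus duality on the genus-one Gorenstein curve $C$ for $h^0(A_d)$. This buys a uniform treatment of (a) and (b), eliminates any appeal to Davis' theorem (the impossible tails never arise because the values are computed rather than constrained), and phrases the dichotomy in (b) as triviality of the degree-zero bundle $A_{2t}$, i.e.\ as a statement about $2$-torsion in $\Pic^0(C)$ valid also for nodal and cuspidal $C$. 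In effect you have imported the paper's Second Way into the setting of Section \ref{ptsoncubics}: compare the sequence $(\star)$ in the proof of Theorem \ref{method2cor1}, which plays the identical role on the blow-up, with $C'$ anticanonical.

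Two small points to tighten. First, in the genericity statement, ``non-torsion'' is the wrong invariant over, say, $\overline{\mathbb{F}}_p$, where every class in $\Pic^0(C)$ is torsion; what you need, and what does hold for general points, is only that $\sum_ip_i-tH$ avoids the finitely many $2$-torsion classes---this is how the paper argues. Second, realizing case (b)(ii) requires producing $n$ distinct smooth points with $\sum_ip_i\sim tH+\epsilon$ for some nonzero $2$-torsion class $\epsilon$, and checking that such an $X$ has the Hilbert function \eqref{hf of 1 2 3 ... 3}; the last point is automatic (since $\epsilon\neq 0$, $X$ is not a complete intersection, so the reduction made after Proposition \ref{ci case} applies), but it should be said, and in characteristic $2$ one must take $C$ ordinary, since a supersingular cubic has no nonzero $2$-torsion. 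The paper sidesteps these issues with its explicit configuration of four lines plus a hypersurface section; your approach, once these details are filled in, proves the same existence statement with less geometry.
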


\begin{proof}
A complete analysis of all cases with $\delta = 0$, where $C$ is a 
reduced cubic and the points $p_i$ either are arbitrary smooth points 
of $C$ or they are completely arbitrary and $C$ is also irreducible, 
is given in the next section using the Second Way.  The interested 
reader can complete the current proof to those cases using the 
techniques of this section, as a further comparison of the methods.

First note that the condition $t > 5-\delta$ implies $2t+\delta > t+5$.  
We proceed via a number of claims.

\bigskip

\noindent {\bf Claim 1:}  {\em For $\ell < 2t+\delta$, $(I_Z)_\ell$ has 
the cubic form $F$ as a common factor (i.e.\ $C$ is part of the base locus).}

Suppose that $G \in (I_Z)_\ell$ does not have $F$ as a factor.  
Then at each point of $X$, the intersection multiplicity of $F$ and 
$G$ is at least 2 since $G$ is double at each point.  Hence by 
B\'ezout's theorem, $3\ell \geq 2n = 2(3t+\delta) = 6t+2\delta$.  
Hence $\ell \geq 2t + \frac{2}{3} \delta$, and the claim follows.

\bigskip

\noindent {\bf Claim 2:} {\em For $\ell \leq t+3$, $(I_Z)_\ell$ has $F^2$ as a common factor.}

By Claim 1, since $F$ is not double at any point of $X$, for $\ell < 2t+\delta$ we have an isomorphism
\addtocounter{thm}{1}
\begin{equation} \label{isom of ideals}
(I_X)_{\ell -3} \cong (I_Z)_\ell
\end{equation}
where the isomorphism is given by multiplication by $F$.  But from (\ref{hf of 1 2 3 ... 3}), 
we see that $F$ is a common factor for $(I_X)_k$ for all $k \leq t$.  
Hence $(I_Z)_\ell$ has $F^2$ as a factor whenever $\ell -3 \leq t$, as claimed.

This verifies the claimed first difference of the Hilbert function up to degree $t+3$.    
Note that the Hilbert function, in degree $t+3$, has value equal to
\[
1 + 2 + 3 + 4 + 5 + 6 \cdot [(t+3) - 4] = 6t+9.
\]
We now compute the value in degree $t+4 < 2t$.  Using the isomorphism (\ref{isom of ideals}), we have
\[
\begin{array}{rcl}
h_Z(t+4) & = & \binom{t+6}{2} - h^0({\mathcal I}_Z(t+4)) \\ \\
& = & \binom{t+6}{2} - h^0({\mathcal I}_X (t+1)) \\ \\
& = & \binom{t+6}{2} - \left [ \binom{t+3}{2} - h_X(t+1) \right ] \\ \\
& = & \binom{t+6}{2} - \left [ \binom{t+3}{2} - (3t+\delta) \right ] \\ \\
& = & 6t + 12 + \delta
\end{array}
\]
Then we easily see that $\Delta h_Z(t+4) = 3+\delta$ as claimed.

Next we compute the value in degree $t+5$.  We have 
$2t + \delta > t+5$, so we can use Claim 1.  Then a similar computation gives
\[
h_Z(t+5) = 6t+15+\delta.
\]
From this we immediately confirm  $\Delta h_Z(t+5) = 3$.  

Since $F$ is a common factor in all components $< 2t+\delta$, and 
since $\Delta h_Z$ takes the value 3 already in degree $t+5$, it 
repeats this value until $F$ is no longer a common factor.  In 
particular, it takes the value 3 up to degree $2t+\delta -1$.  

We now have to see what happens past degree $2t+\delta -1$.  
Note that using our above calculations, it follows that 
\[
\begin{array}{rcl}
h_Z(2t+\delta-1) & = & 6t+12+\delta + 3[2t+\delta-1 - (t+4)] \\ \\
& = & 3 ( 3t+ \delta) - 3 + \delta.
\end{array}
\]
Since $\deg Z = 3(3t+\delta)$, we have reached the multiplicity 
minus $(3-\delta)$.  We consider these cases separately.  
When $\delta = 1$ or $\delta = 2$, we are adding only 2 or 1, 
respectively, and since the first difference of the Hilbert function 
cannot be flat at this point, $\Delta h_Z$ must be as claimed in (a). This completes (a).
Since the sum of the values of $\Delta h_Z$ up to degree $2t-1$ is $9t-3$, this observation that $\Delta h_Z$  cannot be flat at this point also proves that the possibilities listed in (b) are the only ones possible.

 If $\delta = 0$, though, $\Delta h_Z$ can either end $\dots 3,3,0$ or 
 $\dots 3,2,1$.  We now consider these two possibilities.  The former 
 means that also in degree $2t+\delta = 2t$, all forms in $I_Z$ have 
 $F$ as a factor.  The latter means that there is a form, $G$, of degree 
 $2t+\delta = 2t$ in $I_Z$ that does not have $F$ as a factor, and 
 hence $(F, G)$ is a regular sequence (since $F$ is irreducible).

Suppose that the latter holds.  Note that the complete intersection 
defined by $(F,G)$ has degree $3 \cdot 2t = 6t = 2n$.  As in Claim 1, 
$G$ cuts out on $C$ a divisor of degree at least $2n$, so in fact $G$ 
cuts out exactly the divisor $2X$ on $C$.  So $X$ itself is not a 
complete intersection (since it has the Hilbert function given by 
(\ref{hf of 1 2 3 ... 3})), but the divisor $2X$ (as a subscheme of 
$\pr2$) is a complete intersection, namely of type $(3,2t)$.  
Note that $2X$, which is curvilinear, is not the same as $Z$.

Now suppose that $C$ is smooth.  We know that then two effective 
divisors of the same degree are linearly equivalent if and only if 
they have the same sum in the group of $C$.  The condition described 
in the previous paragraph implies that the sum of the points of $X$ is a 
2-torsion point in the group of $C$ but is not zero.  Since there are at most three 2-torsion points in the group of $C$,  
for general choices we have a contradiction, and so such a $G$ cannot exist (in general), 
and we have proved the assertion about the general choice of the points.

Finally, we show that the Hilbert function ii.\ of (b) also occurs.  We begin with four general lines, 
$\lambda_1, \lambda_2,\lambda_3,\lambda_4 \subset \pr2$ 
and let $P_1,P_2, P_3, P_4, P_5, P_6$ be the six points of pairwise 
intersection of these lines.  Let $G_1$ be the form defining the union 
of these four lines.  Let $X_1 = \bigcup_{1 \leq i \leq 6} P_i$.  Notice 
that $X_1$ does not lie on any conic, since by B\'ezout any conic 
containing $X_1$ has to contain all four lines $\lambda_1,\dots,\lambda_4$, 
hence must have $G_1$ as a factor.  Hence  Hilbert function of $X_1$ has first difference $(1,2,3)$, and$X_1$ is not a complete intersection.  

Let $C$ be a general cubic curve containing $X_1$, and let $F$ be the 
defining polynomial of $C$.  $C$ is smooth.  Notice that the degree of 
the complete intersection of $F$ and $G_1$ is 12, and this complete 
intersection is at least double at each $P_i$, so in fact it is exactly 
double at each $P_i$.  In particular, there is no additional multiplicity 
at any of the $P_i$ coming from tangency.  As a divisor on $C$, note 
that $X_1$ is not cut out by any conic, since it is not a complete intersection.  
However, the divisor $2X_1$ is cut out by a quartic, namely $G_1$.  

Now let $X$ be the union of $X_1$ with a general hypersurface section, 
$W_1$, of $C$ cut out by a curve of degree $t-2$.  Note that $W_1$ is a 
complete intersection defined by $(F,H)$ for some form $H$ of degree 
$t-2$.  We first claim that $X$ is not a complete intersection.  Indeed, 
suppose that $X$ were a complete intersection defined by $(F,H')$ for 
some $H'$ of degree $t$.  Then $I_X$ links $W_1$ to $X_1$.  But $W_1$ 
and $X$ are both complete intersections sharing a generator, so by 
liaison theory the residual is also a complete intersection.  But we 
have seen that $X_1$ is not a complete intersection.  Contradiction.  
In particular, $\Delta h_X$ is given by (\ref{hf of 1 2 3 ... 3}).

Now let $Z$ be the fat point scheme supported on $X$, and consider 
the form $G_1 H^2$.  This has degree $2t$, and cuts out the divisor 
$2X$ on $C$.  Even more, $G_1 H^2$ is an element of $I_Z$ in 
degree $2t$ that does not have $F$ as a factor.  As we saw above, 
this gives a value $\Delta h_Z(2t) = 2$ and $\Delta h_Z(2t+1) = 1$ 
as desired.  This completes the proof of Theorem~\ref{smooth cubic}.
\end{proof}

Now we wish to explore the possibilities when $C$ is singular and one 
point, $P$, of $X$ is the singular point of $C$.  The arguments are very 
similar, and we will primarily  highlight the differences.  The main 
observation is that $C$ is already double at $P$ so we have to focus 
on the remaining $n-1$ points.

\begin{lem} \label{ci poss}
Assume that $C$ is singular, that $P \in X \subset C$ is the singular 
point of $C$, and that $n \geq 5$.  Then $X$ is not a complete intersection.
\end{lem}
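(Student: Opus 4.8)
The plan is to argue by contradiction: assume $X$ is a complete intersection, so that $I_X=(A,B)$ for forms $A,B$ of degrees $a\le b$ with $ab=n$, and extract a contradiction from the clash between reducedness of $X$ and the presence of the singular point $P\in X$. The governing principle is the local form of B\'ezout's theorem: if two plane curves $V(\Phi)$ and $V(\Psi)$ meet properly at a point $P$, then $I_P(V(\Phi),V(\Psi))\ge \mathrm{mult}_P(V(\Phi))\cdot\mathrm{mult}_P(V(\Psi))$. Since $C=V(F)$ is singular at $P$ we have $\mathrm{mult}_P(C)\ge 2$, so \emph{any} curve through $P$ meeting $C$ properly cuts out a length-$\ge 2$, hence nonreduced, scheme at $P$.

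First I would pin down the degree $a$ of a minimal generator. Because $F$ vanishes on $X$ we have $F\in I_X=(A,B)$, and $\deg F=3$ forces $a\le 3$. On the other hand $a\ge 2$: if $a=1$ then all $n$ points of $X$ lie on a line $\lambda$, which is not a component of the irreducible cubic $C$, so the number of points of $\lambda\cap C$ is at most $3$, giving $n\le 3<5$, a contradiction. Thus $a\in\{2,3\}$, and since $n\ge 5$ one checks that $b\ge 3$ in each case (for $a=2$, $n=2b\ge 5$ forces $b\ge 3$; for $a=3$, $b\ge a=3$).

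The key step is to show that the irreducible cubic $C=V(F)$ may be taken as one of the two defining curves of the complete intersection. Since $F\in(A,B)$ I can write $F=\alpha A+\beta B$ with $\deg\alpha=3-a$ and $\deg\beta=3-b$; note that $\beta$ is a nonzero scalar only when $b=3$, and $\beta=0$ whenever $b>3$. If $\beta\ne 0$, then $(A,B)=(A,F)$, so $C$ is a defining curve. If $\beta=0$, then $F=\alpha A$: when $a=2$ this expresses $F$ as a line times a conic, contradicting irreducibility, and when $a=3$ it forces $V(A)=C$, so again $C$ is a defining curve. Hence in every surviving case $I_X=(F,B')$ and $X=C\cap V(B')$ is a proper intersection, the properness holding because $F$ does not divide $B'$ (otherwise $(F,B')$ would not define a zero-dimensional scheme).

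The contradiction is now immediate from the governing principle: since $P\in X\subset V(B')$ and $\mathrm{mult}_P(C)\ge 2$, we get $I_P\bigl(C,V(B')\bigr)\ge 2$, so the complete intersection $C\cap V(B')$ is nonreduced at $P$, contradicting the reducedness of $X$. I expect the main obstacle to be the bookkeeping of the third paragraph, namely verifying in each admissible pair $(a,b)$ that $C$ itself can be taken as a generator of the complete intersection; this is precisely the step where the degree bound $\deg F=3$, the irreducibility of $C$, and the numerical constraint $n\ge 5$ must all be used together. Once $C$ is forced to be a defining curve, the singular-point obstruction finishes the argument with no further work.
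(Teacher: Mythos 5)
Your proof is correct and takes essentially the same route as the paper's: both hinge on the observation that a complete intersection having $F$ among its generators must be nonreduced at $P$ (local intersection multiplicity at least $2$), together with degree bookkeeping from $F\in I_X$ and the irreducibility of $F$ to force $F$ to be a generator once $n\geq 5$. You have in effect supplied the details the paper omits (its classification of the only reduced complete intersections avoiding $F$, namely $CI(1,1)$, $CI(1,2)$, $CI(2,2)$), recast as a proof by contradiction.
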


\begin{proof}
More precisely, we will show that if $P \in X \subset C$ with $X$ a 
complete intersection, and if $P$ is the singular point of $C$, then 
$X$ has one of the following types:  $CI(1,1),\ CI(1,2),\ CI(2,2)$.

First note that if $X$ is a complete intersection defined by forms $(F,G)$, 
where $F$ is the defining polynomial for $C$, then $X$ has multiplicity 
$\geq 2$ at $P$, so $X$ is not reduced.  Hence we have to determine 
all the possibilities for reduced complete intersections on $C$ that do 
not use $F$ as a minimal generator.  The listed possibilities are clear: 
one point, two points, four points, and these all exist even including 
$P$ as one of the points.  Using the irreducibility of $F$, it is not hard to 
show that these are the only possibilities, and we omit the details.
\end{proof}

\begin{thm} \label{singular cubic}
Assume that $C$ is an irreducible singular cubic with singular point $P$, 
and assume that $P \in X$, where $X$ is a reduced set of $3t + \delta$ 
points of $C$, with $0 \leq \delta \leq 2$.  Assume further that $t > 3$.  
Then the Hilbert function of the double point scheme $Z$ supported on $X$ is as follows.

\begin{enumerate}

\item If $\delta = 0$ then 
{\small 
\[
\begin{array}{l|ccccccccccccccccccccccccccccccccccc}
\deg & 0 & 1 & 2 & 3 & 4 & 5  & \dots & t+2 & t+3 & t+4 & t+5 & \dots & 2t & 2t+1 & 2t+2  \\
\hline
\Delta h_Z & 1 & 2 & 3 & 4 & 5 & 6 &  \dots & 6 & 5 & 3 & 3 & \dots & 3 & 1 & 0
\end{array}
\]
}

\item If $\delta = 1$ then either 
{\small 
\[
\begin{array}{l|ccccccccccccccccccccccccccccccccccc}
\deg & 0 & 1 & 2 & 3 & 4 & 5  & \dots & t+2 & t+3 & t+4 & t+5 & \dots & 2t & 2t+1 & 2t+2  \\
\hline
\Delta h_Z & 1 & 2 & 3 & 4 & 5 & 6 &  \dots & 6 & 6 & 3 & 3 & \dots & 3 & 3 & 0
\end{array}
\]
}
or
{\small 
\[
\begin{array}{l|ccccccccccccccccccccccccccccccccccc}
\deg & 0 & 1 & 2 & 3 & 4 & 5  & \dots & t+2 & t+3 & t+4 & t+5 & \dots & 2t & 2t+1 & 2t+2  \\
\hline
\Delta h_Z & 1 & 2 & 3 & 4 & 5 & 6 &  \dots & 6 & 5 & 4 & 3 & \dots & 3  & 3 & 0
\end{array}
\]
}

\item If $\delta = 2$ then
{\small 
\[
\begin{array}{l|ccccccccccccccccccccccccccccccccccc}
\deg & 0 & 1 & 2 & 3 & 4 & 5  & \dots & t+2 & t+3 & t+4 & t+5 & \dots & 2t & 2t+1 & 2t+2 & 2t+3  \\
\hline
\Delta h_Z & 1 & 2 & 3 & 4 & 5 & 6 &  \dots & 6 & 6 & 4 & 3 & \dots & 3 & 3 & 2 & 0
\end{array}
\]
}
\end{enumerate}

\end{thm}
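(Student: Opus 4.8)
The plan is to run the same reduction as in the proof of Theorem~\ref{smooth cubic}, but to exploit the fact that, since $P$ is the singular point of $C$, the defining cubic $F$ already vanishes to order $2$ at $P$; thus $F\in I(P)^2$ and $FH$ is automatically double at $P$ for every $H$. Writing $X'=X\setminus\{P\}$, a set of $n-1=3t+\delta-1$ smooth points of $C$, I first claim that multiplication by $F$ gives an isomorphism $(I_{X'})_{\ell-3}\cong (I_Z)_\ell$ for all $\ell$ below a B\'ezout threshold. Indeed, $FH\in I_Z$ holds if and only if $H$ vanishes at every point of $X'$ (the condition at $P$ being free), so the image of multiplication by $F$ is exactly the set of forms in $(I_Z)_\ell$ divisible by $F$; and if $G\in(I_Z)_\ell$ were not divisible by $F$, then B\'ezout gives $3\ell=\sum_Q I_Q(F,G)\ge 2\,|X'|+I_P(F,G)\ge 2(3t+\delta-1)+4$, since $G$ is double at each of the $|X'|$ smooth points while at $P$ both $F$ and $G$ have multiplicity $2$, forcing $I_P(F,G)\ge 4$. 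Hence for $\ell$ below the threshold no such $G$ exists and $\Delta h_Z(\ell)=3+\Delta h_{X'}(\ell-3)$ throughout that range, exactly as in the proof of Theorem~\ref{smooth cubic} but with $X'$ in place of $X$.

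Next I would pin down $\Delta h_{X'}$. Since $X'$ lies on the irreducible cubic $C$ and has at least $11$ points, no conic contains it, so $F$ is its minimal generator and $\Delta h_{X'}$ is the strictly decreasing function $1,2,3,\dots,3,\delta',\dots$ attached to $3t'+\delta'=3t+\delta-1$. The crucial point, and the source of the two possibilities in part~(2), is that when $\delta=1$ we have $|X'|=3t$, a multiple of $3$, so $X'$ may or may not be a complete intersection of type $(3,t)$: in the non-complete-intersection case $\Delta h_{X'}$ ends $\dots,3,0$ while in the complete intersection case it ends $\dots,3,2,1,0$. Substituting into $\Delta h_Z(\ell)=3+\Delta h_{X'}(\ell-3)$ turns these two tails into the values $6,3$ and $5,4$ in degrees $t+3,t+4$, that is, precisely the two functions listed. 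By contrast, for $\delta=0$ and $\delta=2$ one has $|X'|=3t-1$ and $3t+1$, which are not divisible by $3$; as the minimal curve through $X'$ is the cubic, such a set cannot be a complete intersection, so $\Delta h_{X'}$ is unique and each of parts~(1) and~(3) carries a single function. In every case the formula reproduces all the claimed values except those in the top one or two degrees.

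It remains to read off the tail, the degrees $\ge 2t+\delta$ where $F$ need no longer be a common factor. Here I would compute, from the formula just established, the value of $h_Z$ at the last degree of the isomorphism range; comparing with $\deg Z=3(3t+\delta)$ shows that only $1$ unit (when $\delta=0$), $0$ units (when $\delta=1$), or $2$ units (when $\delta=2$) of the Hilbert function remain to be placed, and Lemma~\ref{reg lemma} bounds how far out they can occur. For $\delta=0$ and $\delta=1$ this already forces the endings $\dots,1,0$ and $\dots,3,0$ uniquely. For $\delta=2$ the two remaining units could a priori be distributed as $2,0$ or $1,1$; to exclude $1,1$ I would invoke Davis (Theorem~\ref{davis thm}): a repeated value $\Delta h_Z(2t+2)=\Delta h_Z(2t+3)=1$ would force a common linear factor of the top two graded pieces of $I_Z$, hence a subscheme of $Z$ of degree $2t+4$ supported on a line, whereas a line meets $C$ in at most $3$ points and so carries a subscheme of $Z$ of degree at most $6<2t+4$. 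This contradiction leaves only the ending $2,0$.

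I expect the main obstacle to be the recognition and bookkeeping of the complete-intersection dichotomy for $\delta=1$: verifying that both the complete-intersection and the generic configurations of $X'$ actually occur with $P$ off $X'$, and that they are the only two. This must be combined with a careful determination of the B\'ezout threshold, so that the isomorphism range reaches as far as degree $t+4$ while the extra intersection $I_P(F,G)\ge 4$ at the singular point is used correctly; this is exactly what separates the present singular case from the smooth case of Theorem~\ref{smooth cubic}. By comparison the Davis step for $\delta=2$ is routine once the degree-on-a-line bound is in hand.
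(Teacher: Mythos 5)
Your proposal is correct and follows essentially the same route as the paper's own proof: the B\'ezout argument using intersection multiplicity at least $4$ at the singular point $P$ to get the isomorphism $(I_{X\setminus\{P\}})_{\ell-3}\cong(I_Z)_\ell$ in the stated range, the determination of $\Delta h_{X\setminus\{P\}}$ with the complete-intersection dichotomy exactly when $\delta=1$, and the endgame via the degree count together with a Davis-type (non-flatness) argument. If anything, you make explicit the tail analysis for $\delta=0,2$ (positivity of the difference function until it vanishes, plus Theorem \ref{davis thm} to exclude the $1,1$ ending using the bound of at most three points of $X$ on a line) that the paper omits by referring to the proof of Theorem \ref{smooth cubic}.
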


\begin{proof}

The bound $t > 3$ is simply to ensure that in each case, some value of the Hilbert 
function $\Delta h_Z$ takes the value 3.  For instance, in the case $\delta = 0$, 
we have $2t > t+3$. As a consequence of Lemma \ref{ci poss}, when 
$n = 3t + \delta \geq 5$ the Hilbert function of $X$ must have first difference
\addtocounter{thm}{1}
\begin{equation} \label{hf of X on sing}
\begin{array}{l|ccccccccccccccccccccccccccccccccccc}
\deg & 0 & 1 & 2 & 3 & \dots & t & t+1 & t+2   \\
\hline
\Delta h_X & 1 & 2 & 3 & 3 & \dots & 3 & \delta & 0
\end{array}
\end{equation}
 
 In analogy with Theorem \ref{smooth cubic}, we first have
 
 \medskip
 
 \noindent {\bf Claim 1:} {\em Assume that
 \[
 \ell \leq 
 \left \{
 \begin{array}{ll}
 2t & \hbox{if $\delta = 0$} \\ \\
 2t+1 & \hbox{if $\delta = 1,2$}
 \end{array}
 \right.
 \]
Then $F$ is a common factor of $(I_Z)_\ell$.}
 
The proof is the same as that of Claim 1 in Theorem \ref{smooth cubic}, 
except that the intersection multiplicity of $F$ and $G$ at $P$ is now at least 4.
 
\noindent {\bf Claim 2:} {\em For $\ell \leq t+2$, $(I_Z)_\ell$ has $F^2$ as a common factor.  Furthermore,

\bigskip

\begin{itemize}

\item If $\delta = 0$ then $F^2$ is {\em not} a common factor of $(I_Z)_{t+3}$.

\item If $\delta = 2$ then $F^2$ is a common factor of $(I_Z)_{t+3}$.

\item If $\delta =1$ then $F^2$ may or may not be a common factor of $(I_Z)_{t+3}$ (examples exist for either option).

\end{itemize}
}

The proof of Claim 2 hinges on the possible Hilbert functions for $X-\{P\}$.  
In particular, we show that $(I_{X-\{P\}})_{t-1}$ always has $F$ as a common 
factor, and the differences in the three cases rest with the possibilities for $(I_{X-\{P\}})_t$, 
which we get by comparing to those for $I_X$, obtained using Lemma \ref{ci poss}.

\begin{itemize}

\item If $\delta = 0$ then $X$ has Hilbert function with first difference

\[
\begin{array}{l|ccccccccccccccccccccccccccccccccccc}
\deg & 0 & 1 & 2 & 3 & \dots & t & t+1  \\
\hline
\Delta h_X & 1 & 2 & 3 & 3 & \dots & 3  & 0
\end{array}
\]
so clearly the only possibility for $\Delta h_{X-\{P\}}$ is
\[
\begin{array}{l|ccccccccccccccccccccccccccccccccccc}
\deg & 0 & 1 & 2 & 3 & \dots & t-1 & t & t+1   \\
\hline
\Delta h_{X-\{P\}} & 1 & 2 & 3 & 3 & \dots & 3 & 2 & 0
\end{array}
\]
Hence there is a form $G$ of degree $t$ vanishing on $X-\{P\}$ but not 
containing $F$ as a factor, so $FG \in (I_Z)_{t+3}$ does not have $F^2$ as a factor.\\

\item If $\delta = 2$ then $X$ has Hilbert function with first difference

\[
\begin{array}{l|ccccccccccccccccccccccccccccccccccc}
\deg & 0 & 1 & 2 & 3 & \dots & t-1 & t & t+1 & t+2   \\
\hline
\Delta h_{X-\{P\}} & 1 & 2 & 3 & 3 & \dots & 3 & 3 & 2 & 0
\end{array}
\]

\noindent so $\Delta h_{X-\{P\}}$ is

\[
\begin{array}{l|ccccccccccccccccccccccccccccccccccc}
\deg & 0 & 1 & 2 & 3 & \dots & t-1 & t & t+1 & t+2   \\
\hline
\Delta h_{X-\{P\}} & 1 & 2 & 3 & 3 & \dots & 3 & 3 & 1 & 0
\end{array}
\]

\end{itemize}

\noindent We know that $(I_Z)_\ell \cong (I_{X - \{P\}})_{\ell -3}$ for $\ell$ satisfying 
the bounds of Claim 1, and  as a result of the above observations we know 
when $(I_{X - \{P\}})_{\ell -3}$ is forced to have $F$ as a common factor, so the claim follows.

\item If $\delta = 1$ then $X$ has Hilbert function with first difference 

\[
\begin{array}{l|ccccccccccccccccccccccccccccccccccc}
\deg & 0 & 1 & 2 & 3 & \dots & t & t+1 & t+2 \\
\hline
\Delta h_X & 1 & 2 & 3 & 3 & \dots & 3  & 1 & 0
\end{array}
\]

\noindent so $\Delta h_{X-\{P\}}$ is either

\[
\begin{array}{l|ccccccccccccccccccccccccccccccccccc}
\deg & 0 & 1 & 2 & 3 & \dots & t-1 & t & t+1   \\
\hline
\Delta h_{X-\{P\}} & 1 & 2 & 3 & 3 & \dots & 3 & 3 & 0
\end{array}
\]

or

\[ 
\begin{array}{l|ccccccccccccccccccccccccccccccccccc}
\deg & 0 & 1 & 2 & 3 & \dots & t-1 & t & t+1 & t+2  \\
\hline
\Delta h_{X-\{P\}} & 1 & 2 & 3 & 3 & \dots & 3 & 2 & 1 & 0
\end{array}
\]

\noindent Since we have removed $P$, the remaining points could 
be a complete intersection, so $F^2$ is a common factor of $(I_Z)_{t+3}$ if and only 
if the points of $X - \{P\}$ are not a complete intersection of a curve of degree $t$ with $F$. 
This completes the proof of Claim 2.

The rest of the proof is very similar to that of Theorem \ref{smooth cubic} and we omit the details.
\end{proof}


\section{Approach II: Points on Cubics}\label{appII}

Let $Z = m_1p_1 + \cdots + m_n p_n \subset \mathbb P^2$, 
where the points $p_1,\dots,p_n$ are distinct and arbitrary.
When $n<9$, a complete determination of $h_Z$ 
is given in \cite{GHM}, but the case of $n\geq9$ remains of interest.
Giving a complete determination of $h_Z$
for all $n\geq9$ arbitrary distinct points $p_1, \ldots , p_n$
would involve solving some extremely hard
open problems.
For example, it is even an open problem to determine $h_Z$ for $n>9$
when the points $p_1, \ldots , p_n$ are general and $m_1=\cdots =m_n$.
So here, as in \textsection\ref{ptsoncubics},
we consider the case of $n\geq9$ points $p_i$ in special cases. These cases include those
considered in \textsection\ref{ptsoncubics}. We recover and in some cases extend the
results of \textsection\ref{ptsoncubics}, but the methods we use here are different. 
To start, let $p_1,\cdots,p_n$ be $n\geq 9$ distinct points on a reduced plane
cubic $C$.  If $C$ is not irreducible, we assume further that all the points 
are smooth points of $C$.  If $D$ is  a component of $C$, let $n_D$ be the number of
these points on $D$. We will say that the points are {\em evenly
distributed\/} if $n_D=n(\deg(D))/3)$ for every reduced 
irreducible component $D$ of $C$.  Note that for $n$ points to be 
evenly distributed, it is necessary either that 3 divide $n$ or that $C$ 
be irreducible.

We will use some facts about surfaces obtained by blowing up points in the plane,
in particular we'll make use of the intersection form on such surfaces, which we now briefly recall.
Given distinct points $p_1,\ldots,p_n\in\pr2$, let $\pi : Y\to {\bf P}^2$ 
be the morphism obtained by blowing up the points $p_i$. The divisor class group ${\rm Cl}(Y)$
of divisors modulo linear equivalence
is a free abelian group with basis $[L],[E_1],\ldots,[E_n]$, where
$L$ is the pullback to $Y$ of a general line, and $E_i=\pi^{-1}(p_i)$.
There is a bilinear form, called the {\em intersection form}, defined on the group of divisors,
which descends to ${\rm Cl}(Y)$. It is uniquely determined by the fact
that $L$, $E_1$, $\ldots$, $E_n$ are orthogonal with respect to the intersection form,
with $L\cdot L=L^2=1$ and $E_i^2=-1$ for $i=1,\ldots,n$. For two distinct, reduced, irreducible curves 
$C_1$ and $C_2$ on $Y$, $C_1\cdot C_2$ is just the number of points of intersection
of the two curves, counted with multiplicity.
We recall that a divisor $F$ is {\em nef} 
if $F\cdot C\geq 0$ for every effective divisor $C$. A useful 
criterion for nefness is that if $F$ is an effective divisor such 
that $F\cdot C\geq0$ for every component $C$ of $F$, then $F$ is 
nef.

In preparation for stating Theorem \ref{method2cor1}, our main result in this section, we
set some additional notation. Let $Z=m(p_1+\cdots+p_n)$. In 
degrees $t$ such that $3t=mn$,
the value of $h_Z(t)$ is influenced by torsion in the group $\Pic(C)$.
Our formula for $h_Z$ as given in Theorem \ref{method2cor1}
accounts for this influence via an integer-valued function we will denote by $s$.
In fact, $s$ depends on the points $p_i$, on $m$ and on $t$, but for a fixed
set of points $p_i$ it is convenient to mostly suppress the dependence on the points
and denote $s$ as $s(t,n,m)$, where the parameter $n$ is a reminder of the dependence on the $n$ points. To define $s(t,n,m)$,
let $L$ be a general line in the plane and fix evenly distributed smooth points
$p_1,\ldots,p_n$ of a reduced cubic $C$. Since Theorem \ref{method2cor1} applies only for
$n\geq9$ and we need $s(t,n,m)$ only when $t\geq nm/3$, we define $s(t,n,m)$ only for $n\geq9$
when $t\geq nm/3$:

\begin{itemize}
\item[(1)] If $t>nm/3$, we set $s(t,n,m)=0$.
\item[(2)] If $n=9$ and $t=3m$, let $\lambda$ be the order (possibly infinite) of
${\mathcal O}_C(3L)\otimes {\mathcal O}_C(-p_1-\cdots-p_9)$ in $\Pic(C)$.
We then set $s(t,n,m)=\lfloor m/\lambda\rfloor$. 
\item[(3)] If $n>9$ and $t=nm/3$, we set $s(t,n,m)=1$ if 
${\mathcal O}_C(tL)\otimes {\mathcal O}_C(-mp_1-\cdots-mp_n)={\mathcal O}_C$ 
in $\Pic(C)$, and we set $s(t,n,m)=0$ otherwise. 
\end{itemize}

The value of $s(t,n,m)$ depends on whether
${\mathcal O}_C(tL)\otimes {\mathcal O}_C(-mp_1-\cdots-mp_n)$ is trivial.
Note triviality of this line bundle is equivalent to the divisor $mp_1+\cdots+mp_n$ on $C$ 
being the intersection of $C$ with a curve $H$, 
necessarily of degree $t=mn/3$. Of course it can happen that 
${\mathcal O}_C(tL)\otimes {\mathcal O}_C(-p_1-\cdots-p_n)$ is non-trivial even though
${\mathcal O}_C(tmL)\otimes {\mathcal O}_C(-mp_1-\cdots-mp_n)$ is trivial. For example,
if $p_1, p_2$ and $p_3$ are flexes on $C$ but not collinear, then 
${\mathcal O}_C(L)\otimes {\mathcal O}_C(-p_1-p_2-p_3)$ is not trivial, but
${\mathcal O}_C(3L)\otimes {\mathcal O}_C(-3p_1-3p_2-3p_3)$ is trivial, and $H$ in this case
is the union of the lines tangent to $C$ at the points $p_1$, $p_2$ and $p_3$. 
When $C$ is a  smooth cubic curve, triviality of 
${\mathcal O}_C(tL)\otimes {\mathcal O}_C(-mp_1-\cdots-mp_n)$
is equivalent to the sum $mp_1+\cdots+mp_n$ being trivial in the group law on the cubic
(with respect to a flex being taken as the identity element).   (The divisor $X_1$ given in the proof of part (c) of Theorem \ref{smooth cubic} gives another example, and shows that this issue arose also with the first approach.)

\begin{rem}
When $n=9$, the values of $\lambda$ that can occur depend on
the torsion in $\Pic(C)$, and this depends on $C$ and on the characteristic
of the ground field; see Remark \ref{torsionRem}.
Thus knowing something about $C$ tells us something about
what Hilbert functions can occur for points on $C$, but the Hilbert 
functions themselves depend only on $\lambda$, and
already for a smooth irreducible non-supersingular cubic $C$,
there is torsion of all orders. 
\end{rem}

\begin{thm}\label{method2cor1}
Let $X =p_1+\cdots+p_n$  be a set of $n\geq 9$
evenly distributed smooth points on a reduced plane cubic $C$.  Let $Z = mX$.  
The value $h_Z(t) = \hbox{dim} (k[{\bf P}^2]/(I(Z)))_t$
of the Hilbert function in degree $t$ is:
\begin{itemize}
\item[(i)] $\binom{t+2}{2}$ if $t<3m$;
\item[(ii)] $n\binom{m+1}{2}-s(t,n,m)$ if $t \geq nm/3$; and
\item[(iii)] $\binom{t+2}{2}-\binom{t-3r+2}{2}+n\binom{m-r+1}{2}-s(t-3r,n,m-r)$ if $n>9$ and
$3m\leq t< mn/3$, where $r=\lceil (mn-3t)/(n-9)\rceil$.
\end{itemize}
\end{thm}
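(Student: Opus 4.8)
The plan is to compute $h_Z(t)$ by passing to the blown-up surface $\pi:Y\to\pr2$ at the $n$ points and using the translation \eqref{RRoch2}, namely $h_Z(t)=\sum_i\binom{m_i+1}{2}-h^1(Y,\mathcal{O}_Y(F))$ with $F=tL-m(E_1+\cdots+E_n)$ and $\sum_i\binom{m_i+1}{2}=n\binom{m+1}{2}$. Everything thus reduces to computing $h^0$, $h^1$, $h^2$ of the class $F_t:=tL-mE_1-\cdots-mE_n$ in the three degree ranges. Since $-K_Y=3L-E_1-\cdots-E_n$ is effective (the proper transform of $C$ together with the exceptional curves, or rather the class of $C$ pulls back appropriately), the anticanonical results of \cite{anticanSurf} govern the cohomology, and the even-distribution hypothesis is what forces $F_t$ to behave uniformly.

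First I would dispose of range (i), $t<3m$. Here one shows $h^0(Y,\mathcal{O}_Y(F_t))=0$: any curve of degree $t$ through each $p_i$ with multiplicity $m$ would, by B\'ezout against the cubic $C$ (or its components, using even distribution so that no component is overloaded), force $3t\geq$ a quantity exceeding $3t$ unless it contains $C$; iterating the ``$C$ is a fixed component'' argument shows no such curve exists when $t<3m$, so $I(Z)_t=0$ and $h_Z(t)=\binom{t+2}{2}$. Next, for range (ii), $t\geq nm/3$, the class $F_t$ (or a suitable twist) is nef, so $h^2=0$ and $h^1$ is controlled by the anticanonical geometry; the contribution $s(t,n,m)$ is exactly the correction to $h^1$ coming from the line bundle $\mathcal{O}_C(tL)\otimes\mathcal{O}_C(-mp_1-\cdots-mp_n)$ on $C$. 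The cleanest route is the standard exact sequence
\[
0\to \mathcal{O}_Y(F_t-\tilde C)\to \mathcal{O}_Y(F_t)\to \mathcal{O}_{\tilde C}(F_t)\to 0,
\]
where $\tilde C\in|-K_Y|$ is the proper transform of $C$; since $F_t-\tilde C=F_{t-3,m-1}$ in the obvious notation, this sets up an induction on $m$, with $\mathcal{O}_{\tilde C}(F_t)$ of degree zero on $C$ and its $h^0$ jumping by $1$ precisely when the relevant bundle on $C$ is trivial. Unwinding the induction and summing the jumps reproduces $s(t,n,m)=\lfloor m/\lambda\rfloor$ when $n=9$ (the degree-zero bundle has finite order $\lambda$, so triviality recurs every $\lambda$ steps) and the $0/1$ value when $n>9$.

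For range (iii), $n>9$ and $3m\leq t<mn/3$, the class $F_t$ is not nef, and the point of the formula is to subtract off the fixed part. Here $r=\lceil(mn-3t)/(n-9)\rceil$ is chosen so that $F_t-r\tilde C=F_{t-3r,m-r}$ is the nef reduction: subtracting the anticanonical (equivalently, peeling off $r$ copies of the fixed cubic $C$) lands one in range (ii) for the parameters $(t-3r,n,m-r)$. Concretely $h^0(Y,\mathcal{O}_Y(F_t))=h^0(Y,\mathcal{O}_Y(F_{t-3r,m-r}))$ because $\tilde C$ is a fixed component appearing with multiplicity exactly $r$, and then \eqref{RRoch2} together with the already-established case (ii) gives
\[
h_Z(t)=\binom{t+2}{2}-\binom{t-3r+2}{2}+n\binom{m-r+1}{2}-s(t-3r,n,m-r),
\]
which is the claimed value. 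The verification that $C$ (i.e.\ $\tilde C$) is a fixed component with the precise multiplicity $r$, and that after removing it the residual class is genuinely nef with vanishing $h^1$ outside the torsion correction, is the main obstacle: one must check the nefness criterion stated before the theorem against every negative curve on $Y$, and the even-distribution hypothesis is exactly what guarantees $F_{t-3r,m-r}\cdot D\geq0$ for each component $D$ of $\tilde C$ and for the exceptional curves. I would isolate this as the key lemma (computing the fixed part of $|F_t|$ via repeated restriction to $\tilde C$ and the anticanonical formalism of \cite{anticanSurf}), after which ranges (i)–(iii) follow by assembling the three computations above.
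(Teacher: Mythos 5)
Your proposal follows essentially the same route as the paper's proof: the same translation to the blow-up via \eqref{RRoch2}, the same use of the even-distribution hypothesis to make $F\cdot D'\geq 0$ for every component $D'$ of the proper transform $C'$ equivalent to $F\cdot C'\geq 0$, the same B\'ezout peeling of $C'$ to handle range (i) and to reduce range (iii) to range (ii) with $r=\lceil (mn-3t)/(n-9)\rceil$, and the same restriction sequence $0\to \mathcal{O}_Y(F-C')\to\mathcal{O}_Y(F)\to\mathcal{O}_{C'}(F)\to 0$ with induction to produce the torsion correction $s(t,n,m)$ (the paper cites \cite[Theorem III.1]{anticanSurf} for the subcases where you run the induction directly, a purely organizational difference). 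The one step your sketch elides is the verification that this sequence remains exact on global sections when $\mathcal{O}_{C'}(sC')$ is trivial (the paper shows $|\lambda C'|$ has a member disjoint from $C'$); without that surjectivity, ``summing the jumps'' yields only an upper bound for $h^1(Y,\mathcal{O}_Y(mC'))$ rather than the exact value $\lfloor m/\lambda\rfloor$.
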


\begin{proof} This result is a corollary of the main result of
\cite{anticanSurf}. 
Let $F=tL-mE_1-\cdots-mE_n$, where $\pi : Y\to {\bf P}^2$ 
is the morphism obtained by blowing up the points $p_i$,
$L$ is the pullback to $Y$ of a general line, and $E_i=\pi^{-1}(p_i)$.

Let $C'\subset Y$ be the proper transform of $C$ with respect to $\pi$. 
Since the points $p_i$ blown up are smooth points on $C$, we see
$[C']=[3L-E_1-\cdots-E_n]$ (and hence $C'$ is an anticanonical divisor). Moreover,
each component of $C'$ is the proper transform
$D'$ of a component $D$ of $C$, and each of the components of $C$ (and hence of $C'$) is 
reduced. (To see this note that $n_D>0$ for each component $D$ of $C$ 
since the points $p_i$ are evenly distributed, but the number of points $p_i$ which lie on $D$
is $n_D$ and all of the points $p_i$ are smooth points of $C$, so each component of $C$ has a smooth point
and hence must be reduced.)

In addition, the following statements are equivalent:
\begin{itemize}
\item[(a)] $F\cdot D'\geq 0$ for every irreducible component $D'$ of $C'$; 
\item[(b)] $F\cdot C'\geq 0$; and
\item[(c)] $F\cdot D'\geq 0$ for some irreducible component $D'$ of $C'$.
\end{itemize}
Clearly, (a) implies (b), and (b) implies (c). We now show that (c) implies (a).
If $C'$ has only one component, then (c) and (a) are 
trivially equivalent, so suppose $D_1'$ and $D_2'$ are distinct components of $C'$.
In order to show that $F\cdot D_1'\geq0$ implies $F\cdot D_2'\geq0$,
we will use the assumption that the points $p_i$ are evenly distributed smooth points of $C$.
Let $D_j=\pi(D_j')$, so $D_j'$ is the proper transform of $D_j$.
Because the points are evenly distributed, we have
$n_{D_j}=n(\deg(D_j))/3$. Thus $n_{D_j}$ of the $n$ points $p_i$
lie on $D_j$. Because the points are smooth points of $C$, we have
$[D_j']=[\deg(D_j)L-\sum_{p_i\in D_j}E_i]$, where the sum involves $n_{D_j}$ terms. 
Thus $F\cdot D_j'\geq 0$ can be rewritten as $t\deg(D_j)-mn_{D_j}\geq0$.
Substituting $n(\deg(D_j))/3$ for $n_{D_j}$ gives $t\deg(D_j)-mn(\deg(D_j))/3\geq0$
which is equivalent to $3t-mn\geq0$, which is itself just $F\cdot C'\geq0$.
Thus $F\cdot D_1'\geq0$ and $F\cdot D_2'\geq0$ are both equivalent to $F\cdot C'\geq0$,
and hence $F\cdot D_1'\geq0$ if and only if $F\cdot D_2'\geq0$. This shows (c) implies (a).

We now show that $h^0(Y,{\mathcal O}_Y(F))=0$ 
if and only if $t<3m$. For $t\geq 3m$, we have ${\mathcal O}_Y(F)
={\mathcal O}_Y((t-3m)L+mC')$, and hence $h^0(Y,{\mathcal O}_Y(F))>0$.
If, however, $t<3m$, then $3t<9m\leq nm$ so
$F\cdot C'<0$, and hence, as we saw above,
$F\cdot D'< 0$ for each component $D'$ of $C'$, in which case  
each component $D'$ of $C'$ is a fixed component of $|F|$ so
$h^0(Y, {\mathcal O}_Y(F))=h^0(Y, {\mathcal O}_Y(F-C'))
=h^0(Y, {\mathcal O}_Y((t-3)L-(m-1)E_1-\cdots-(m-1)E_n))$.
But $t-3<3(m-1)$, so, by the same argument, we can again
subtract off $C'$ without changing $h^0$. Continuing
in this way we eventually obtain $h^0(Y, {\mathcal O}_Y(F))=
h^0(Y, {\mathcal O}_Y((t-3m)L))=h^0(\pr2, {\mathcal O}_{\pr2}(t-3m))$, but 
$h^0(\pr2, {\mathcal O}_{\pr2}(t-3m))=0$ since $t-3m<0$. Thus 
$h_{Z}(t)=\binom{t+2}{2}$ for $t<3m$, which proves (i).

Next consider (ii). If $t\geq nm/3$, i.e., if $F\cdot C'\geq0$, then as we saw above
$F\cdot D'\geq0$ for every component $D'$ of $C'$.
But as we also saw above, $(t-3m)L+mC'\in |F|$, hence $F$ is nef.
If $t>nm/3$ (in which case $s(t,n,m)=0$), then $F\cdot C'>0$, so
by \cite[Theorem III.1(a,b)]{anticanSurf}, $h^1(Y,{\mathcal O}_Y(F))=0$.
Thus \eqref{RRoch2} gives $h_{Z}(t)=n\binom{m+1}{2}=n\binom{m+1}{2}-s(t,n,m)$ as claimed.
We are left with the case that $t=nm/3$. 

Suppose $t=nm/3$ and $n=9$. Thus $F=mC'$ and
$F\cdot C'=0$ (because $n=9$ and $t=3m$), so $(C')^2=0$.
By duality we have $h^2(Y, {\mathcal O}_Y(mC'))=h^0(Y, {\mathcal O}_Y(-(m+1)C'))=0$,
so by Riemann-Roch we have 
$h^0(Y, {\mathcal O}_Y(F))-h^1(Y, {\mathcal O}_Y(F))=1+(F^2+C'\cdot F)/2=1$.
Since $F$ is nef, so is $iC'$ for all $i\geq0$. Since $F\cdot C'=0$, either
$|F|$ has an element disjoint from $C'$ or $F$ and $C'$ share a common component.

If $|F|$ has an element disjoint from $C'$, then ${\mathcal O}_{C'}(F)$ is trivial, so
$h^0(C', {\mathcal O}_{C'}(F))=1$ since $C$ (and hence $C'$) is connected and 
reduced.
Suppose $F$ and $C'$ share a common component. Then $C'$ is in the base 
locus of $|F|$ by \cite[Corollary III.2]{anticanSurf}, and hence 
$h^0(Y, {\mathcal O}_Y(F))=h^0(Y, {\mathcal O}_Y(F-C'))$.
Let $\phi$ be the least $i>0$ (possibly infinite) such that
$C'$ is not in the base locus of $|iC'|$. Then 
we have $h^0(Y, {\mathcal O}_Y(jC'))=h^0(Y, {\mathcal O}_Y((j+1)C'))$
for $0\leq j<\phi-1$, so by induction (using the base case
$h^0(Y, {\mathcal O}_Y)=1$ and the fact $h^0(Y, {\mathcal O}_Y(jC'))-h^1(Y, {\mathcal O}_Y(jC'))=1$)
we have $h^0(Y, {\mathcal O}_Y(jC'))=1$ and $h^1(Y, {\mathcal O}_Y(jC'))=0$ for all $0\leq j<\phi$. 
It follows that
$$0\to {\mathcal O}_Y((s-1)C')\to {\mathcal O}_Y(sC')\to {\mathcal O}_{C'}(sC')\to 0 \eqno(\star)$$
is exact on global sections for $1\leq s\leq \phi$, and that $h^0(C', {\mathcal O}_{C'}(sC'))=0$
and $h^1(C', {\mathcal O}_{C'}(sC'))=0$ for $0<s<\phi$. Thus ${\mathcal O}_{C'}(sC')$
is nontrivial for $0<s<\phi$. Since for all $m$, $|mC'|$ either has an element disjoint from $C'$ or
$C'$ is in the base locus of $|mC'|$, we see that ${\mathcal O}_{C'}(\phi C')$ is trivial,
and hence $\phi$ is the order of ${\mathcal O}_{C'}(C')$ in $\Pic(C')$. But since the points
$p_i$ blown up are smooth points of $C$, the morphism $\pi:Y\to\pr2$ induces an isomorphism 
$C\to C'$, and under this isomorphism,
${\mathcal O}_C(3L)\otimes {\mathcal O}_C(-p_1-\cdots-p_n)$ 
corresponds to ${\mathcal O}_{C'}(C')$, so we see $\phi=\lambda$. 
It follows that  $h^0(C', {\mathcal O}_{C'}(sC'))=h^1(C', {\mathcal O}_{C'}(sC'))$
for all $s\geq0$, and these are both 1 if ${\mathcal O}_{C'}(sC')$ is trivial
(i.e., if $s$ is a multiple of $\lambda$) and they are 0 otherwise.

We now claim that $(\star)$ is exact on globals sections for all $s\geq 1$.
It is enough to show this
when $s$ is a multiple of $\lambda$, because otherwise, as we noted above,
$h^0(C', {\mathcal O}_{C'}(sC'))=0$ and hence
$(\star)$ is automatically exact on global sections.
But $|\lambda C'|$ (and hence also $|i\lambda C'|$ for all $i\geq1$) has an element disjoint from $C'$, so
$H^0(Y, {\mathcal O}_Y(i\lambda C'))\to H^0(C', {\mathcal O}_{C'}(i\lambda C'))$ is onto,
which shows that $(\star)$ is exact on global sections when $s$ is a multiple of $\lambda$.

It follows that $(\star)$
is also exact on $h^1$'s (since as above $h^2(Y, {\mathcal O}_Y(iC'))=0$
for all $i\geq0$), and hence that
$h^1(Y, {\mathcal O}_Y(mC'))=h^1(Y, {\mathcal O}_Y)+
\sum_{1\leq i\leq m}h^1(C', {\mathcal O}_{C'}(iC'))$. 
Now $h^1(Y, {\mathcal O}_Y)=h^1(\pr2, {\mathcal O}_{\pr2})=0$
and $h^1(C', {\mathcal O}_{C'}(iC'))$ is 1 if and only if $i$ is a multiple of $\lambda$
and it is 0 otherwise. Thus $h^1(Y, {\mathcal O}_Y(mC'))$ is the number of summands
$h^1(C', {\mathcal O}_{C'}(iC'))$ for which $i$ is a multiple of $\lambda$; i.e.,
$h^1(Y, {\mathcal O}_Y(mC'))=\lfloor m/\lambda\rfloor$, which is just $s(t,n,m)$.
This implies that $h_{Z}(t)=n\binom{m+1}{2}-s(t,n,m)$, as claimed.

If $t=nm/3$ but $n>9$, then $F^2>0$ so by \cite[Theorem III.1(c)]{anticanSurf}
either ${\mathcal O}_{C'}(F)$ is trivial (in which case $s(t,n,m)=1$)
and $h^1(Y,{\mathcal O}_Y(F))=1$ (and hence 
$h_{Z}(t)=n\binom{m+1}{2}-1=n\binom{m+1}{2}-s(t,n,m)$),
or $C'$ is in the base locus of $|F|$. If $C'$ is in the base locus,
then by \cite[Theorem III.1(d)]{anticanSurf} and the fact that $F^2>0$ we have
${\mathcal O}_{C'}(F)$ is not trivial (in which case $s(t,n,m)=0$)
and $h^1(Y,{\mathcal O}_Y(F))=0$, and hence 
$h_{Z}(t)=n\binom{m+1}{2}-s(t,n,m)$, as claimed. 

Now consider case (iii); i.e., $3m\leq t<nm/3$ and $n>9$. Then
$F\cdot D'< 0$ for each component $D'$ of $C'$ (since
the points are evenly distributed), in which case  
$h^0(Y, {\mathcal O}_Y(F))=h^0(Y, {\mathcal O}_Y(F-C'))
=h^0(Y, {\mathcal O}_Y((t-3)L-(m-1)E_1-\cdots-(m-1)E_n))$.
If $t-3 < n(m-1)/3$, we can subtract $C'$ off again. 
This continues  until we have subtracted $C'$ off $r=\lceil (mn-3t)/(n-9)\rceil$ times,
at which point we have that $F-rC'$ is nef and effective
and $h^0(Y, {\mathcal O}_Y(F))=h^0(Y, {\mathcal O}_Y(F-rC'))$.
Applying (ii) to $F-rC'$ gives
$\binom{t-3r+2}{2}-h^0(Y, {\mathcal O}_Y(F-rC'))=h_{(m-r)Z}(t-3r)=n\binom{m-r+1}{2}-s(t-3r,n,m-r)$
or $h^0(Y, {\mathcal O}_Y(F-rC'))=\binom{t-3r+2}{2}-(n\binom{m-r+1}{2}-s(t-3r,n,m-r))$.
Substituting this in for $h^0(Y, {\mathcal O}_Y(F))$ in 
$h_{Z}(t)=\binom{t+2}{2}-h^0(Y, {\mathcal O}_Y(F))$ gives (iii).
\end{proof}

\begin{rem}
We can now write down all possible Hilbert functions for
$n\geq 9$ points of multiplicity $m$ for each possible choice of Hilbert function
for the reduced scheme given by the points, if the points are smooth points of a reduced cubic curve
and evenly distributed. Suppose $X=p_1+\cdots+p_n$ and $m=1$.
If $3$ does not divide $n$, or it does but 
$s(n/3,n,1)=0$, then the difference function
for the Hilbert function of $X$ is the same as given in \eqref{hf of 1 2 3 ... 3}, but
if $3$ divides $n$ and $s(n/3,n,1)=1$, then $X$ is a complete intersection 
and the difference function
for the Hilbert function of $X$ is the same as given in \eqref{hf  of 1 2 3 ... 3 2 1}.

We now compare our results for $Z = 2X=2(p_1+\cdots+p_n)$ with those obtained in Proposition \ref{ci case} and Theorem \ref{smooth cubic}, and we explicitly list those cases skipped there (because there we assumed $n=3t$ with $t\geq3$ in Proposition \ref{ci case} and $n=2t+\delta$ with
$t>5-\delta$ in Theorem \ref{smooth cubic}).

Say $n\equiv 1\mod 3$. Then
the difference function for the Hilbert function is:

$n=10$: 1 2 3 4 5 6 6 3 0

$n=13$: 1 2 3 4 5 6 6 6 4 2 0, and for

$n=10+3x$ for $x>1$: the result is the same as given in Theorem \ref{smooth cubic}(a).

Next, say $n\equiv 2\mod 3$. Then the difference function for the Hilbert function is:

$n=11$: 1 2 3 4 5 6 6 5 1 0, and for

$n=11+3x$ for $x>0$: the result is the same as given in Theorem \ref{smooth cubic}(a).

If $n=3x$, there are two possibilities.
If $s(2x,n,2)=0$ for the given points (i.e.,
the divisor $2p_1+\cdots+2p_n$ on $C$ is not cut out by a curve of degree 
$2x$, or equivalently
${\mathcal O}_C(2xL-2E_1-\cdots-2E_n)$ is not trivial), 
then the difference function for the Hilbert function is:

$n=9$: 1 2 3 4 5 6 6 0, and

$n=3x$: 1 2 3 4 5 6 $\ldots$ 6 3 $\ldots$ 3 0 
for $x\geq4$,
where the number of 6's is $x-1$
and the number of trailing 3's is $x-3$.
For $x>5$, this is the same as the result given in Theorem \ref{smooth cubic}(b). 

If $s(2x,n,2)=1$ for the given points (i.e.,
the divisor $2p_1+\cdots+2p_n$ on $C$ is cut out by a curve of degree 
$2x$, or equivalently ${\mathcal O}_C(2xL-2E_1-\cdots-2E_n)$ is trivial), 
but $s(n/3,n,1)=0$ 
(so $p_1+\cdots+p_n$ is not cut out by a curve of degree $x$, which is
equivalent to saying that ${\mathcal O}_C(xL-E_1-\cdots-E_n)$ is not trivial), 
then the difference function for the Hilbert function is:

$n=9$: 1 2 3 4 5 6 5 1 0,

$n=12$: 1 2 3 4 5 6 6 6 2 1 0, and

$n=3x$: 1 2 3 4 5 6 $\ldots$ 6 3 $\ldots$ 3 2 1 0
for $x>4$,
where the number of 6's is $x-1$
and the number of trailing 3's is $x-4$.
For $x>5$, this is the same as the result given in Theorem \ref{smooth cubic}(c). 

Now say $n=3x$ and $s(x,n,1)=1$. In this case, $X$ is the complete intersection of 
$C$ and a form of degree $t$, and the difference function for the Hilbert function of $2X$ is:

$n=9$: 1 2 3 4 5 6 4 2 0

$n=12$: 1 2 3 4 5 6 6 5 3 1 0 and

$n=3x$ for $x>4$: the result is the same as given in Proposition \ref{ci case}.
\end{rem}

\begin{rem}\label{torsionRem}
The possible values of the Hilbert functions as given in Theorem \ref
{method2cor1} depend partly on what torsion occurs in $\Pic(C)$, and
this in turn is affected by the characteristic of $k$. 
When $C$ is smooth,
see \cite[Example IV.4.8.1]{refHr} for a discussion of the torsion.
When $C$ is reduced but not smooth, the torsion is easy to understand since
it is all contained in the identity component $\Pic^0(C)$ of $\Pic(C)$,
whose group structure is isomorphic either 
to the additive or multiplicative groups of the ground field.
(See for example \cite[Proposition 5.2]{refHL}, which states a result for
curves of so-called canonical type. But for any reduced cubic $C$, one can always find a set of 9 evenly distributed 
smooth points of $C$, 
and the proper transform $C'$ with respect to blowing those points up
is a {\em curve of canonical type}, meaning that $C'\cdot D=K_X\cdot D=0$
for every component $D$ of $C'$. Since the points blown up are smooth  on $C$,
$C$ and $C'$ are isomorphic and thus so are $\Pic(C)$ and $\Pic(C')$, hence the conclusion of
\cite[Proposition 5.2]{refHL} applies to $C$, even though $C$ is not itself of canonical type.)
When $C$ is reduced and
irreducible but singular, for example, the result is that
$\Pic^0(C)$ is the additive
group of the ground field when $C$ is cuspidal and it is the
multiplicative
group of the field when $C$ is nodal \cite[Exercise II.6.9]{refHr}.
In particular, if $C$ is an irreducible cuspidal cubic curve over
a field of characteristic zero, then $\Pic^0(C)$ is torsion free, so
$h_
{2X}$ cannot be $(1,2,3,4,5,6,6,6,2,1)$; indeed, this follows, after
a simple calculation, because if ${\mathcal O}_{C}(2xL-2p_1-
\cdots-2p_n)$ is trivial, then so is ${\mathcal O}_{C}(xL-p_1-\cdots-
p_n)$.  On the other hand, ${\mathcal O}_{C}(xL-p_1-\cdots-p_n)$ can
be nontrivial even if ${\mathcal O}_{C}(2xL-2p_1-\cdots-2p_n)$ is
trivial
if the characteristic is 2 or if the singular point is a node but the
characteristic is not 2, since in those cases $\Pic(C)$ has elements of
order 2.
\end{rem}

\begin{rem} \label{method2cor2}
We can also use the method of proof of Theorem \ref{method2cor1}
to recover the result of Theorem \ref{singular cubic}
for the Hilbert function of $mX=m(p_1+\cdots+p_n)$ for $n\geq9$ points on a
reduced, irreducible cubic curve $C$ where $p_1$, say, is the singular
point and $m$ is 1 or 2. As is now clear, the approach of Theorem \ref{method2cor1}
is to determine $h^0(Y, {\mathcal O}_Y(tL-mE_1-\cdots-mE_n))$ for all $t$,
and then translate this into the Hilbert function or the difference function
for $mX$.

This translation is purely mechanical and the resulting Hilbert functions 
in the case that $n\geq 12$ are 
already given in Theorem \ref{singular cubic} (we leave writing down the Hilbert functions
for $9\leq n\leq 11$ using the results that follow as an exercise for the reader).
Thus it is the calculation of 
$h^0(Y, {\mathcal O}_Y(tL-mE_1-\cdots-mE_n))$
that is of most interest, and it is on this that we now focus.

Let $Y$ be the blow up of the points, let $C'$ be the proper
transform of $C$, and let $F_t=tL-E_1-\cdots-E_n$ and $G_t=tL-2(E_1+\cdots+E_n)$,
where we denote by $L$ both a general line in the plane and its pullback to $Y$.
Up to linear equivalence, note that $C'=3L-2E_1-E_2-\cdots-E_n$.

The goal here is to compute the values of $h^0(Y, {\mathcal O}_{Y}(F_t))$ and
$h^0(Y, {\mathcal O}_Y(G_t))$. For $t<3$, B\'ezout tells us that $h^0(X,{\mathcal O}_Y(F_t))=0$,
since $F_t\cdot C'<0$ (hence $h^0(Y, {\mathcal O}_Y(F_t))=h^0(Y, {\mathcal O}_Y(F_t-C'))$
and $(F_t-C')\cdot L<0$ (hence $h^0(Y, {\mathcal O}_Y(F_t-C'))=0$). If $t\geq3$, then certainly 
$h^0(Y,{\mathcal O}_Y(F_t))>0$, since $F_t=(t-3)L+C'+E_1$. We consider three cases, according to whether
$F_t\cdot C'<0$, $F_t\cdot C'>0$ or $F_t\cdot C'=0$.

If $0<F_t\cdot C'=3t-2-(n-1)$ (i.e., if $3\leq t<(n+1)/3$), 
then $h^0(Y,{\mathcal O}_Y(F_t))=h^0(Y,{\mathcal O}_Y(F_t-C'))=
h^0(Y,{\mathcal O}_Y((t-3)L+E_1))=h^0(Y,{\mathcal O}_Y((t-3)L))=
h^0(\pr2,{\mathcal O}_{\pr2}((t-3)L))=\binom{t-3+2}{2}$,
since $F_t-C'=(t-3)L+E_1$. If $F_t\cdot C'>0$ (i.e., $t>(n+1)/3$), then
$h^0(Y,{\mathcal O}_Y(F_t))=\binom{t+2}{2}-n$ (since $F_t$, meeting both
components of $-K_Y=C'+E_1$ positively, is nef and hence
$h^1(Y,{\mathcal O}_Y(F_t))=0$ by \cite[Theorem III.1(a,b)]{anticanSurf}).

This leaves the case that  $t = (n+1)/3$.  This means that $
{\mathcal O}_{C'}(F_t)$ has degree 0.  Consider the exact sequence
$$
0\to {\mathcal O}_Y((t-3)L+E_1)\to {\mathcal O}_Y(F_t)\to {\mathcal
O}_{C'}(F_t)\to 0.
$$
By an analogous argument to the one used to show $h^1(Y,{\mathcal O}
_Y(F_t))=0$ when $t > (n+1)/3$, we obtain that $h^1({\mathcal O}_Y
((t-3)L+E_1)) = 0$.  But $C'$ is a smooth rational curve, so also
the third sheaf in the sequence has vanishing first cohomology.
Thus we obtain $h^1(Y, O_Y(F_t))=0$, hence  the points impose
independent conditions.  It follows that  $h^0(Y, O_Y(F_t))=\binom
{t +2}{2}-n$ also for $t = (n+1)/3$.

We thus have: $h^0(Y,{\mathcal O}_Y(F_t))=0$ for $0\leq t <3$; 
$h^0(Y,{\mathcal O}_Y(F_t))=\binom{t-1}{2}$ for $3\leq t <(n+1)/3$;
and $h^0(Y,{\mathcal O}_Y(F_t))=\binom{t+2}{2}-n$ for $t\geq (n+1)/3$.

A similar analysis works for $2X$. There are now four ranges of degrees.
The first range is $t<6$, in which case $h^0(Y,{\mathcal O}_Y(G_t))=0$ by B\'ezout, arguing as above. 
For $t\geq6$, we have $h^0(Y,{\mathcal O}_Y(G_t))>0$, since up to linear equivalence
we have $G_t=(t-6)L+2(C'+E_1)$.
The second range is now $6\leq t< (n+8)/3$; in this case $2C'$ is, by B\'ezout,
a fixed component of $|G_t|$, so $h^0(Y,{\mathcal O}_Y(G_t))=
h^0(Y,{\mathcal O}_Y((t-6)L+2E_1))=\binom{t-4}{2}$.
The third range is $(n+8)/3\leq t <(2/3)(n+1)$, for which $C'$ is a fixed component of $|G_t|$
(and $G_t-C'=(t-6)L+C'+2E_1$ is nef)
so $h^0(Y,{\mathcal O}_Y(G_t))=h^0(Y,{\mathcal O}_Y(G_t-C'))$
and we know $h^0(Y,{\mathcal O}_Y(G_t-C'))$ by Theorem \ref{method2cor1}(ii) if $n>9$,
while $h^1(Y,{\mathcal O}_Y(G_t-C'))=0$ by \cite[Theorem III.1(a,b)]{anticanSurf}) if $n=9$,
so again we know $h^0(Y,{\mathcal O}_Y(G_t-C'))$.
The last range is $t\geq (2/3)(n+1)$, in which case $G_t$ is nef. 
If $t>(2/3)(n+1)$, then $G_t$ meets $-K_Y$ positively,
so $h^1(Y,{\mathcal O}_Y(G_t))=0$ \cite[Theorem III.1(a,b)]{anticanSurf}), 
and $h^0(Y,{\mathcal O}_Y(G_t))=\binom{t+2}{2}-3n$.
We are left with the case that $t=(2/3)(n+1)$. Consider the exact sequence
$$0\to {\mathcal O}_Y((t-3)L-E_2-\cdots-E_n)\to {\mathcal O}_Y(G_t)\to {\mathcal O}_{C'}(G_t)\to 0.$$
Since $G_t\cdot C'\geq0$ and $C'$ is smooth and rational, we have
$h^1(C',{\mathcal O}_{C'}(G_t))=0$, and since ${\mathcal O}_Y((t-3)L-E_2-\cdots-E_n)=
{\mathcal O}_Y((t-6)L+C'+2E_1)$ and $(t-6)L+C'+2E_1$ is nef (as observed above)
with  $(G_t-C')\cdot C'>0$, we have $h^1(Y,{\mathcal O}_Y(G_t-C'))=0$ \cite[Theorem III.1(a,b)]{anticanSurf})
and hence $h^1(Y,{\mathcal O}_Y(G_t))=0$, so in fact $h^0(Y,{\mathcal O}_Y(G_t))=\binom{t+2}{2}-3n$.
\end{rem}

\begin{rem}
Here we comment on what is left to do if one wants to recover the results of
section 2 using the methods of section 4.
So consider $n=9$ points on a given cubic $C$ (but note that there may be more  
than one cubic through the points), either all of multiplicity 1 or all of multiplicity 2.
The case that the points are evenly distributed smooth points of $C$
is done above, as is the case that the curve $C$ is reduced and irreducible. The case that the
points all lie on a conic follows from the known result for configuration types
of points on a conic \cite{GHM}. What's left is that the points do not all lie on
any given conic (and hence $C$ is reduced) and either: one or more of the points
is not a smooth point of $C$ and $C$ is not irreducible, or the points are
not distributed evenly (and hence again $C$ is not irreducible).
The four reducible cubics that arise are: a conic and a line tangent
to the conic; a conic and a transverse line; three lines passing through
a point; and three lines with no point common to all three. Each of these cases
leads to a number of cases depending on how the points
are placed (such as how many are on each component and whether one or more
is a singular point of the cubic, but also depending on the group law of the cubic).
Analyzing these cases would give a complete result of the Hilbert functions
of the form $h_X$ and $h_{2X}$ for a reduced scheme $X$ consisting of 9 distinct
points of the plane.
\end{rem}

\end{document}